\newcommand{\HRightarrow}{\kern0.05ex\vcenter{\hbox{\Huge\ensuremath{\Rightarrow}}}\kern0.05ex}
\newcommand{\hRightarrow}{\kern0.05ex\vcenter{\hbox{\huge\ensuremath{\Rightarrow}}}\kern0.05ex}
\newcommand{\LLRightarrow}{\kern0.05ex\vcenter{\hbox{\LARGE\ensuremath{\Rightarrow}}}\kern0.05ex}
\newcommand{\LRightarrow}{\kern0.05ex\vcenter{\hbox{\Large\ensuremath{\Rightarrow}}}\kern0.05ex}
\newcommand{\HUparrow}{\kern0.05ex\vcenter{\hbox{\Huge\ensuremath{\Uparrow}}}\kern0.05ex}
\newcommand{\hUparrow}{\kern0.05ex\vcenter{\hbox{\huge\ensuremath{\Uparrow}}}\kern0.05ex}
\newcommand{\LLUparrow}{\kern0.05ex\vcenter{\hbox{\LARGE\ensuremath{\Uparrow}}}\kern0.05ex}
\newcommand{\LUparrow}{\kern0.05ex\vcenter{\hbox{\Large\ensuremath{\Uparrow}}}\kern0.05ex}
\newcommand{\HDownarrow}{\kern0.05ex\vcenter{\hbox{\Huge\ensuremath{\Downarrow}}}\kern0.05ex}
\newcommand{\hDownarrow}{\kern0.05ex\vcenter{\hbox{\huge\ensuremath{\Downarrow}}}\kern0.05ex}
\newcommand{\LLDownarrow}{\kern0.05ex\vcenter{\hbox{\LARGE\ensuremath{\Downarrow}}}\kern0.05ex}
\newcommand{\LDownarrow}{\kern0.05ex\vcenter{\hbox{\Large\ensuremath{\Downarrow}}}\kern0.05ex}
\newcommand{\HSearrow}{\kern0.05ex\vcenter{\hbox{\Huge\ensuremath{\Searrow}}}\kern0.05ex}
\newcommand{\hSearrow}{\kern0.05ex\vcenter{\hbox{\huge\ensuremath{\Searrow}}}\kern0.05ex}
\newcommand{\LLSearrow}{\kern0.05ex\vcenter{\hbox{\LARGE\ensuremath{\Searrow}}}\kern0.05ex}
\newcommand{\LSearrow}{\kern0.05ex\vcenter{\hbox{\Large\ensuremath{\Searrow}}}\kern0.05ex}
\newcommand{\HNearrow}{\kern0.05ex\vcenter{\hbox{\Huge\ensuremath{\Nearrow}}}\kern0.05ex}
\newcommand{\hNearrow}{\kern0.05ex\vcenter{\hbox{\huge\ensuremath{\Nearrow}}}\kern0.05ex}
\newcommand{\LLNearrow}{\kern0.05ex\vcenter{\hbox{\LARGE\ensuremath{\Nearrow}}}\kern0.05ex}
\newcommand{\LNearrow}{\kern0.05ex\vcenter{\hbox{\Large\ensuremath{\Nearrow}}}\kern0.05ex}
\newcommand{\HSwarrow}{\kern0.05ex\vcenter{\hbox{\Huge\ensuremath{\Swarrow}}}\kern0.05ex}
\newcommand{\hSwarrow}{\kern0.05ex\vcenter{\hbox{\huge\ensuremath{\Swarrow}}}\kern0.05ex}
\newcommand{\LLSwarrow}{\kern0.05ex\vcenter{\hbox{\LARGE\ensuremath{\Swarrow}}}\kern0.05ex}
\newcommand{\LSwarrow}{\kern0.05ex\vcenter{\hbox{\Large\ensuremath{\Swarrow}}}\kern0.05ex}
\newcommand{\falpha}{\kern0.01ex\vcenter{\hbox{\footnotesize\ensuremath{\alpha}}}\kern0.01ex}
\newcommand{\salpha}{\kern0.05ex\vcenter{\hbox{\small\ensuremath{\alpha}}}\kern0.05ex}
\newcommand{\nalpha}{\kern0.05ex\vcenter{\hbox{\normalsize\ensuremath{\alpha}}}\kern0.05ex}
\newcommand{\lalpha}{\kern0.05ex\vcenter{\hbox{\large\ensuremath{\alpha}}}\kern0.05ex}
\newtheorem{thm}{Theorem}[section]
\newtheorem{cor}[thm]{Corollary}
\newtheorem{lem}[thm]{Lemma}
\newtheorem{prop}[thm]{Proposition}
\theoremstyle{definition}
\newtheorem{defn}[thm]{Definition}
\theoremstyle{remark}
\newtheorem{rem}[thm]{Remark}
\newtheorem{example}{Example}
\DeclareMathOperator{\holim}{holim}
\def\Sim{\mathbb{\Delta}}
\def\sset{s\mathcal{S}et}
\def\csset{{s\mathcal{S}et}^{\mathbb{\Delta}}}
\def\rcsset{{s\mathcal{S}et}^{\mathbb{\Delta}_{r}}}
\newcommand{\tgpd}{\kern0.05ex\vcenter{\hbox{\footnotesize\ensuremath{2}}}\kern0.05ex\mathcal{G}pd} 
\def\C{\mathcal{C}}
\def\G{\mathcal{G}}
\def\H{\mathcal{H}}
\def\N{\mathbb{N}}
\def\W{\mathbb{W}}
\def\rar{\rightarrow}
\def\hrar{\hookrightarrow}
\def\oline{\overline}
\def\uline{\underline}
\def\mb{\mathbf}
\def\cl{\mathcal}
\newcommand\ackname{Acknowledgements:}
  \newenvironment{acknowledgements}{%
      \titlepage
      \null\vfil
      \@beginparpenalty\@lowpenalty
      \begin{center}%
        \bfseries \ackname
        \@endparpenalty\@M
      \end{center}}%
     {\par\vfil\null\endtitlepage}
\title{Higher Descent Data as a Homotopy Limit}
\author{Matan Prasma}
\date{}
\begin{document}
\maketitle

\begin{abstract}

We define the 2-groupoid of descent data assigned to a cosimplicial 2-groupoid and present it as the homotopy limit of the cosimplicial space gotten after applying the 2-nerve in each cosimplicial degree. This can be applied also to the case of $n$-groupoids thus providing an analogous presentation of ``descent data" in higher dimensions. 

\end{abstract}

\begin{section}{Introduction}
In this note we reinterpret algebro-geometric information, namely descent data, in a homotopically-invariant way. Given a cosimplicial 2-groupoid $\G^\bullet$, its descent data is (the 2-nerve of) a 2-groupoid $\mb{Desc}(\G^\bullet):=Tot_r(\N\G^\bullet)$ (where $Tot_r$ means ``totalization without degeneracies") whose path components coincide with the set of descent data modulo the gauge equivalence relation (see \cite{BGNT} and also \cite[Definitions 1.4, 1.5]{Ye1}). We show that this 2-groupoid is (canonically equivalent to) the homotopy limit $\holim_{\Sim}\N\G^\bullet$ where $\N$ is the 2-nerve applied on each level. Thus, given a weak equivalence of cosimplicial 2-groupoids $\G^\bullet\rar \H^\bullet$, the map $\mb{Desc}(\G^\bullet)\rar \mb{Desc}(\H^\bullet)$ is a weak equivalence of 2-groupoids; this generalizes \cite[Theorem 0.1]{Ye1}. 
We know of two situations in which this setup can arise. 

The first concerns Maurer-Cartan equations. Consider a cosimplicial DGLA, which shows up for instance as the \v{C}ech construction for a sheaf of nilpotent parameter DGLAs. Taking the Deligne 2-groupoid (which encodes solutions to Maurer-Cartan equations) of each cosimplicial degree gives rise to a cosimplicial 2-groupoid. As follows from \cite[Theorem 0.4]{Ye2}, a quasi-isomorphism of cosimplicial pronilpotent DGLAs of quantum type (i.e. concentrated in degrees $\geq -1$) induces a weak equivalence of cosimplicial 2-groupoids.

The second is in the classification of $\G$-gerbes for a sheaf of groups $\G$ (see \cite{Br1}, \cite{Br2}). There, the cosimplicial 2-groupoid arises via the \v{C}ech construction (with respect to a cover) from the sheaf of 2-groups (or crossed modules) $\G \rar Aut(\G)$ and the descent data approximates isomorphism classes of $\G-$gerbes. In some cases, for example when the cover totally trivializes all $\G$-gerbes, $\pi_0\mb{Desc}(\G)$ will classify all $\G-$gerbes and a refinement will yield a weak equivalence of cosimplicial 2-groups.

Descent data is intimately related to non-abelian cohomology. For this reason, the role of codegeneracies is degenerate and we can consider the restricted totalization (see \S\ref{Tot}) which simplifies the homotopical framework. This eliminates the difficulty arising from the fact that the cosimplicial simplicial set gotten by taking the 2-nerve of each level of a cosimplicial 2-groupoid need not be Reedy fibrant (see \cite[Example 9]{Ja}) and gives an argument which is also valid for the case of $n$-groupoids; this is discussed in \S\ref{n-descent}.
\newline
\end{section}

\begin{ackname}
I would like to thank Amnon Yekutieli for introducing and motivating the question at hand and to Yonatan Harpaz for a useful discussion.
\end{ackname}

\begin{section}{Totalization and Restricted Totalization}\label{Tot}
Let $\Sim$ be the category whose objects are non-empty finite ordinals $[0],[1],...,[n]$ where $[n]=\{0,1,...,n\}$ and whose morphisms are weakly order preserving functions. Every morphism in $\Sim$ is a composition of face maps $d^i:[n-1]\rar [n]$ and degeneracies $s^i:[n+1]\rar [n]$, $i=0,...,n$. A simplicial set is a functor $X:\Sim^{op}\rar Set$ and we write $X_n:=X([n])$, $d_i:=X(d^i)$, $s_i:=X(s^i)$. Write $\sset$ for the category whose objects are simplicial sets and whose morphisms are natural transformations. \\

A \emph{cosimplicial object} in a category $\mathcal{C}$ is a functor $\Sim\rar \cl{C}$. In particular, a \emph{cosimplicial simplicial set} is a cosimplicial object in $\sset$. We write $\csset$ for the category whose objects are cosimplicial simplicial sets and whose morphisms are natural transformations.
If $X$ is a cosimplicial object we will denote the object assigned to $[n]$ by $X^n$. The maps $d^i:=X(d^i)$ and $s^i:=X(s^i)$ are called cofaces and codegeneracies respectively. The \emph{cosimplicial standard simplex} $\Delta$ has $\Delta^n$ in its $n$-th cosimplicial degree and cofaces and codegeneracies induced by precomposition. 
For $X,Y\in\csset$, the product $X\times Y$ is the cosimplicial simplicial set with $(X\times Y)^n:=X^n\times Y^n$ and for $A\in\sset$, we write, by abuse of notation, $A$ for the constant cosimplicial simplicial set with $A^n:=A$ for all $n$ and cofaces and codegeneracies being identities.  

The category $\csset$ is enriched over simplicial sets. Given $X,Y\in \csset$, the `internal hom' $\uline{\csset}(X,Y)$ is the simplicial set whose $n$-simplices are 
$$\uline{\csset}(X, Y)_n=\csset(X\times \Delta^n,Y)$$
Here, $X\times \Delta^n$ is the product of $X$ with the constant cosimplicial simplicial set $\Delta^n$.

With this enrichment, $\csset$ is a simplicial category in the sense of \cite[II,2.1]{GJ} or in our terminology, \emph{tensored and cotensored} over $\sset$ (see \cite[II, 2.5]{GJ}). For $A,B\in\csset$ we denote the tensor and cotensor
functors by $$A\times (-): \sset\rar \csset\;\;and\;\;B^{(-)}:(\sset)^{op} \rar \csset$$ respectively; these are the left adjoints of $\uline{\csset}(A,-)$ and $\uline{\csset}(-,B)$.
\begin{defn}
The \emph{totalization} $Tot:\csset\rar \sset$ is the simplicial set $Tot(X^\bullet)=\uline{\csset}(\Delta^\bullet,X^\bullet)$. 
\end{defn}

We let $\Sim_{r}$ denote the subcategory of $\Sim$ with the same objects but only injective maps i.e. compositions of face maps $d^i$. A \emph{restricted cosimplicial object} in a category $\cl{C}$ is a functor $\Sim_{r}\rar \cl{C}$; it is also called a \emph{semi-cosimplicial object} by some authors. In particular, a restricted cosimplicial object in $\sset$ is called a \emph{restricted cosimplicial simplicial set}. There is an obvious `restriction' functor $r:\csset\rar \rcsset$ and in particular we have $r\Delta\in\rcsset$. The category $\rcsset$ is again enriched over simplicial sets so that if $X,Y\in \rcsset$ we denote $\uline{\rcsset}(X,Y)\in \sset$. Its $n$-simplices are $\uline{\rcsset}(X,Y)_n:=\rcsset(X\times r\Delta^n,Y)$ and given $\theta:[m]\rar [n]$ in $\Sim$, the map $\theta^*:\uline{\rcsset}(X,Y)_n\rar \uline{\rcsset}(X,Y)_m$ is induced by composing with the map $\theta_*:r\Delta^m\rar r\Delta^n$. Simplicial identities hold since their opposites hold in $\Delta$. The arguments in \cite[II,2.5]{GJ} may be used verbatim to show that $\rcsset$ is tensored and cotensored over $\sset$.
\begin{defn}
The \emph{restricted totalization} is the functor $Tot_r:\rcsset\rar \sset$ defined by $Tot_r(X^\bullet)=\uline{\rcsset}(r\Delta^\bullet,X^\bullet)$.
\end{defn}
More generally we can use ends (see \cite[IX.5]{Ma}) to get:
\begin{defn}\label{abstract tot}
Let $\C$ be a category cotensored over simplicial sets.
\begin{enumerate}
\item The \emph{totalization} of $\G^\bullet \in \C^{\Sim}$ is the object of $\C$ is given by the end  $$Tot(\G^\bullet):=\int_{[n]\in \Sim}(\G^n)^{\Delta^n}.$$
\item The \emph{restricted totalization} of $\G^\bullet \in \C^{\Sim_r}$ is the object of $\C$ is given by the end $$Tot_r(\G^\bullet):=\int_{[n]\in \Sim_r}(G^n)^{\Delta^n}.$$
\end{enumerate}
\end{defn}

\end{section}

\begin{section}{Model structures}
We assume the reader is familiar with the definition of a model category. Let us shortly spell out the definition of a simplicial model category.

\begin{defn}
A model category $\cl{M}$ is called \emph{simplicial} if it is enriched with tensor and cotensor over $\sset$ and satisfies the following axiom \cite[II.2 SM7]{Qu}:
If $f:A\rar B$ is a cofibration in $\cl{M}$ and $i:K\rar L$ is a cofibration in $\sset$ then the map 
$$q:A\otimes L\coprod_{A\otimes K} B\otimes K\rar B\otimes L$$
\begin{enumerate}
\item is a cofibration;
\item is a weak equivalence if either
\begin{enumerate}
\item $f$ is a weak equivalence in $\cl{M}$ or
\item $i$ is a weak equivalence in $\sset$.
\end{enumerate}
\end{enumerate}
\end{defn}

\begin{defn}
A category $\cl{R}$ is called a \emph{Reedy category} if it has two subcategories $\cl{R}_+,\cl{R}_-\subseteq \cl{R}$ and a \emph{degree} function $d:ob(\cl{R})\rar \alpha$ where $\alpha$ is an ordinal number such that:
\begin{itemize}
\item Every non-identity morphism in $\cl{R}_+$ raises degree;
\item Every non-identity morphism in $\cl{R}_-$ lowers degree;
\item Every morphism in $\cl{R}$ factors uniquely as a map in $\cl{R}_-$ followed by a map in $\cl{R}_+$.
\end{itemize}
\end{defn}

The category $\Sim$ is a Reedy category with $\Sim_+=\Sim_{inj}\;\;(=\Sim_{r})$, $\Sim_-=\Sim_{surj}$ and the obvious degree function.

Let $\cl{R}$ be a Reedy category and $\cl{C}$ any category. Given a functor $X:\cl{R}\rar \cl{C}$ and an object $n\in\cl{R}$ we set $X^n:=X(n)$ (to relate to the case $\cl{R}=\Delta$), and define the $n$-th \emph{latching object} to be

$$L^nX=colim_{\mathbb{L}(\cl{R})}X^s$$
where $\mathbb{L}(\cl{R})$ is the full subcategory of the over category  ${\cl{R}}_+/n$ containing all objects except the identity $id_n$. 

Dually, define the $n$-th \emph{matching object} to be

$$M^nX=lim_{\mathbb{M}(\cl{R})}X^s$$
where $\mathbb{M}(\cl{R})$ is the full subcategory of the under category $n/\cl{R}_-$ containing all objects except $id_n$. We have natural morphisms
$$L^nX\rar X^n\rar M^nX.$$
The importance of a Reedy structure on $\cl{R}$ is due to the following:
\begin{thm}\cite{Re}
Let $\cl{R}$ be a Reedy category and $\cl{M}$ a model category. The functor category $\cl{M}^{\cl{R}}$ admits a structure of a model category, called \emph{Reedy model structure} in which a map $X\rar Y$ is a
\begin{itemize}
\item  Weak equivalence iff $X^n\rar Y^n$ is a weak equivalence in $\cl{M}$ \\ for every $n$.
\item  Cofibration iff the map $L^nY\coprod_{L^nX}X^n\rar Y^n$ is a cofibration in $\cl{M}$\\ for every $n$.
\item  Fibration iff the map $X^n\rar M^nX\times_{M^nY}Y^n$ is a fibration in $\cl{M}$\\ for every $n$.
\end{itemize}
In particular, an object $X$ is
\begin{itemize}
\item Fibrant iff $X^n\rar M^n X$ is a fibration in $\cl{M}$ for every $n$.
\item Cofibrant iff $L^nX\rar X^n$ is a cofibration in $\cl{M}$ for every $n$.
\end{itemize}
Moreover \cite[Theorem 4.7]{An}, if the model structure on $\cl{M}$ is simplicial, so is the Reedy model structure on $\cl{M}^\cl{R}$.
\end{thm}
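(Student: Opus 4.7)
The plan is to verify the model category axioms by induction on the Reedy degree, following the strategy of Reedy's original argument. The two-out-of-three axiom and closure of the three classes under retracts are immediate from the levelwise definition of weak equivalences and from the fact that the latching and matching objects are respectively colimits and limits computed pointwise, hence interact well with retracts.

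For the factorization axiom, given $f:X\rar Y$ in $\cl{M}^{\cl{R}}$, I would construct a factorization $X\rar Z\rar Y$ by defining $Z$ on objects of increasing Reedy degree. Having defined $Z$ on all objects of degree less than $n$ together with the requisite structure maps, both $L^nZ$ and $M^nZ$ are determined and fit into a canonical comparison map
\[
L^nZ\coprod_{L^nX}X^n\;\lrar\;M^nZ\times_{M^nY}Y^n.
\]
Factoring this arrow in $\cl{M}$ as a (cofibration, trivial fibration) or (trivial cofibration, fibration) pair furnishes $Z^n$ together with the maps $L^nZ\rar Z^n$ and $Z^n\rar M^nZ$. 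These maps in turn determine $Z(\theta)$ for every morphism $\theta$ into or out of $[n]$, since any such $\theta$ factors uniquely as a map in $\cl{R}_-$ followed by a map in $\cl{R}_+$ by the Reedy axioms; functoriality is forced by this uniqueness.

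For the lifting axiom, given a square whose left side is a (trivial) Reedy cofibration $A\rar B$ and whose right side is a (trivial) Reedy fibration $X\rar Y$, the lift $B\rar X$ is constructed degree by degree. At degree $n$, the auxiliary square in $\cl{M}$
\[
\begin{array}{ccc} L^nB\coprod_{L^nA}A^n & \lrar & X^n \\ \downarrow & & \downarrow \\ B^n & \lrar & M^nX\times_{M^nY}Y^n \end{array}
\]
admits a diagonal filler because the left map is a (trivial) cofibration in $\cl{M}$ by the Reedy cofibration hypothesis on $A\rar B$ and the right map is a (trivial) fibration by the Reedy fibration hypothesis on $X\rar Y$. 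Choosing such fillers compatibly produces the desired lift. The main obstacle throughout is the careful bookkeeping needed to check that the inductively chosen values assemble into a genuine functor on $\cl{R}$ with the expected Reedy properties; at every step this rests on the uniqueness of the Reedy factorization $\cl{R}_-\cdot\cl{R}_+$ of morphisms.

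For the simplicial claim, SM7 reduces to showing that the pushout-product of a Reedy cofibration $f:A\rar B$ with a cofibration $i:K\rar L$ in $\sset$ is a Reedy cofibration, which is trivial when either $f$ or $i$ is a weak equivalence. The $n$-th Reedy latching map of the pushout-product can, by direct inspection of colimits, be rewritten as an iterated pushout-product in $\cl{M}$ of the latching map $L^nA\rar L^nB$, of $A^n\rar B^n$, and of $i:K\rar L$; applying SM7 for $\cl{M}$ (together with its trivial-cofibration variant) then yields the conclusion.
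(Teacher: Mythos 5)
The paper offers no proof of this theorem: it is imported from Reedy's manuscript \cite{Re}, with the simplicial addendum cited from \cite{An}. So your sketch can only be measured against the standard argument, and your overall strategy --- inductive construction of factorizations and lifts via the comparison map $L^nZ\coprod_{L^nX}X^n\rar M^nZ\times_{M^nY}Y^n$, with functoriality forced by the unique $\cl{R}_-\cl{R}_+$ factorization --- is the right one. But there is one genuine gap, and it is the crux of the whole theorem: in both the lifting and the factorization steps you assert that if $A\rar B$ is a \emph{trivial} Reedy cofibration then the relative latching map $L^nB\coprod_{L^nA}A^n\rar B^n$ is a \emph{trivial} cofibration (and dually that trivial Reedy fibrations have trivial relative matching maps). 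This is not the definition: a trivial Reedy cofibration is by definition a Reedy cofibration that is a levelwise weak equivalence, and the implication requires its own inductive argument --- one shows that $L^nA\rar L^nB$ is a trivial cofibration by filtering the latching category by degree, deduces that the pushout $A^n\rar L^nB\coprod_{L^nA}A^n$ is a trivial cofibration, and concludes by two-out-of-three applied to $A^n\rar L^nB\coprod_{L^nA}A^n\rar B^n$ (see \cite[15.3.15]{Hi}). Without this lemma your lifting square has no licence to invoke the lifting axiom of $\cl{M}$ in the trivial cases, and you cannot verify that the two factors of your factorization land in the correct classes.

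A smaller point concerns the simplicial statement. The computation you want is that the $n$-th \emph{relative} latching map of the pushout-product $q$ of $f$ and $i$ is the pushout-product, formed using the tensoring of $\cl{M}$ over $\sset$, of the $n$-th relative latching map of $f$ with $i$; this rests on the fact that $(-)\otimes K$ preserves colimits and hence latching objects. Your phrasing --- an ``iterated pushout-product'' of $L^nA\rar L^nB$, of $A^n\rar B^n$, and of $i$ --- does not literally parse, since there is no monoidal product of two maps of $\cl{M}$ against each other. Once that is corrected, SM7 for $\cl{M}$ together with the lemma of the previous paragraph does finish the argument as you indicate.
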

\begin{cor}
The Kan-Quillen model structure $\sset_{K-Q}$ and the Reedy structure on $\Sim$ (respectively $\Sim_{r}$) induce a simplicial model structure on $\csset$ (respectively $\rcsset$).
\end{cor}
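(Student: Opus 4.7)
The statement is essentially a direct application of the last clause of the theorem just cited (from \cite[Theorem 4.7]{An}), so the bulk of my proof will consist of verifying that its hypotheses are met and that the simplicial enrichment produced by that theorem coincides with the enrichment on $\csset$ and $\rcsset$ described in \S\ref{Tot}.

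First, I would recall that $\sset_{K-Q}$ is itself a simplicial model category: the tensor is the cartesian product $K\otimes L := K\times L$, the cotensor is the function complex $L^K$, and the SM7 axiom is Quillen's classical result (see \cite[II.3]{GJ}). Second, I would verify that both indexing categories are Reedy. For $\Sim$ this is already observed in the text, with $\Sim_+=\Sim_{inj}$, $\Sim_-=\Sim_{surj}$, and $d([n])=n$. For $\Sim_r$ one takes $(\Sim_r)_+=\Sim_r$ and $(\Sim_r)_-$ the discrete subcategory on the objects; the unique factorization condition is trivial since every morphism in $\Sim_r$ is already injective. Having verified these two items, the cited theorem supplies a simplicial model structure on $\sset^{\Sim}=\csset$ and on $\sset^{\Sim_r}=\rcsset$.

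The only point that requires care is that the simplicial enrichment produced by \cite[Theorem 4.7]{An} on a functor category $\cl{M}^{\cl{R}}$ is the pointwise one, given on tensors by $(X\otimes K)^n:=X^n\otimes K$ with cofaces and codegeneracies induced by those of $X$, and this must be checked to agree with the enrichment on $\csset$ (resp. $\rcsset$) defined via the internal hom $\uline{\csset}(X,Y)_n=\csset(X\times\Delta^n,Y)$. This identification follows from the universal property: a natural transformation $X\times \Delta^n\rar Y$ of cosimplicial simplicial sets is the same data as a compatible family of maps $X^r\times \Delta^n\rar Y^r$, i.e. a map $X\otimes \Delta^n\rar Y$ with the pointwise tensor. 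The analogous statement for $\rcsset$ is identical, with $\Sim$ replaced by $\Sim_r$ throughout. Once this identification is in place, the SM7 axiom transported from \cite[Theorem 4.7]{An} is exactly SM7 for our enrichment.

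The main (and essentially only) obstacle is this compatibility check between the two presentations of the enrichment; once it is settled, the rest is a formal invocation of the Reedy machinery. In particular no subtle analysis of latching or matching objects is required at this stage, although of course the verification of SM7 itself (inside the proof of \cite[Theorem 4.7]{An}) does use those objects. Given the brevity of the argument, I would present it in the paper as a short paragraph rather than a full proof, pointing the reader to \cite[II.3]{GJ} for simpliciality of $\sset_{K-Q}$ and to \cite[Theorem 4.7]{An} for the Reedy transfer.
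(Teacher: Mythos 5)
Your proposal is correct and follows essentially the same route as the paper, which states this corollary as an immediate consequence of the preceding theorem (Reedy plus \cite[Theorem 4.7]{An}) without further argument. The extra details you supply --- the direct Reedy structure on $\Sim_r$ with $(\Sim_r)_+=\Sim_r$ and $(\Sim_r)_-$ discrete, and the identification of the pointwise tensor with the enrichment $\uline{\csset}(X,Y)_n=\csset(X\times\Delta^n,Y)$ --- are accurate and consistent with how the paper uses these structures later.
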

\begin{example}\label{simplex}
The object $X=\Delta^\bullet\in \csset$ is Reedy cofibrant. The map $L^nX\rar X^n$ is the inclusion $\partial \Delta^n \hrar \Delta^n$ which is a cofibration of simplicial sets.
\end{example}
Next, we recall another model structure on $\rcsset$.
\begin{thm}
The simplicial enrichment of $\rcsset$ can be extended to a simplicial model structure, called the \emph{projective model structure}, in which a map $X\rar Y$ is a
\begin{itemize}
\item
weak equivalence if for each $n$, $X^n\rar Y^n$ is a weak equivalence.
\item
fibration if for each $n$, $X^n\rar Y^n$ is a Kan fibration. 
\item
cofibration if it has the left lifting property with respect to trivial fibrations. 
 \end{itemize}
In particular, $X$ is a fibrant object iff $X^n$ is a Kan complex for every $n$. 
\end{thm}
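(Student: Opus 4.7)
The plan is to realize this structure as the standard projective (or transferred) model structure on the diagram category $\sset^{\Sim_{r}}$. For each $[n]\in\Sim_{r}$, the evaluation functor $\mathrm{ev}_n:\rcsset\to\sset$ admits a left adjoint $F_n$ given by $F_n(K)^m=\coprod_{\Sim_{r}([n],[m])}K$. Since the Kan--Quillen model structure on $\sset$ is cofibrantly generated by $I=\{\partial\Delta^m\hookrightarrow\Delta^m\}_{m\geq 0}$ and $J=\{\Lambda^m_k\hookrightarrow\Delta^m\}_{m\geq k\geq 0}$, I would take as generating (trivial) cofibrations for $\rcsset_{proj}$ the sets $FI:=\bigcup_n F_n(I)$ and $FJ:=\bigcup_n F_n(J)$.

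Next I would verify the hypotheses of Kan's recognition theorem for cofibrantly generated model categories. The domains of all maps in $FI\cup FJ$ are small in $\rcsset$, because simplicial sets are small in $\sset$ and each $F_n$, being a left adjoint, preserves colimits. Relative $FJ$-cell complexes are levelwise weak equivalences: applying $\mathrm{ev}_m$ to such a cell complex gives a transfinite composition of pushouts of coproducts of horn inclusions in $\sset$, and any such map is anodyne. The stated characterizations of (trivial) fibrations then follow by the adjunctions $F_n\dashv\mathrm{ev}_n$, and the levelwise definition of weak equivalences inherits 2-out-of-3 and retract closure from $\sset_{K-Q}$. The fibrant-object description is the specialization of the fibration condition to $X\to\ast$.

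Finally, for simpliciality I would verify the pushout-product axiom SM7. Since the cofibrations in $\rcsset_{proj}$ form the saturation of $FI$ and the pushout-product is colimit-preserving in each variable, it suffices to check SM7 with $f=F_n(j)$ a generator and $i:K\hookrightarrow L$ a cofibration in $\sset$, together with the corresponding trivial cases. The simplicial tensor in $\rcsset$ is computed levelwise (since $\Delta^n$ is interpreted as a constant cosimplicial simplicial set in the definition of the internal hom), so there is a natural isomorphism $F_n(X)\otimes K\cong F_n(X\times K)$. Under this identification the pushout-product $f\square i$ becomes $F_n(j\square i)$, and SM7 for $\rcsset_{proj}$ reduces to SM7 in $\sset_{K-Q}$.

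The main obstacle is this last step: it relies crucially on the fact that the simplicial structure on $\rcsset$ is the standard levelwise one, which the paper imports from \cite{GJ}[II.2.5] without spelling it out. Once that compatibility is confirmed, everything else is a formal application of the cofibrantly-generated model category machinery.
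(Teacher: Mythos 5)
Your construction is correct, and it is the standard one: transfer the Kan--Quillen structure along the adjunctions $F_n\dashv \mathrm{ev}_n$, take $FI$ and $FJ$ as generating (trivial) cofibrations, verify smallness and the recognition theorem, and reduce SM7 to generators via $F_n(X)\otimes K\cong F_n(X\times K)$ (which holds because tensoring is levelwise and products distribute over coproducts in $\sset$). The paper, however, does not argue this way --- indeed it states the theorem without proof --- and the route it implicitly relies on is different and considerably shorter: since every non-identity map of $\Sim_r$ is injective, $\Sim_r$ is a \emph{direct} Reedy category ($\Sim_r=(\Sim_r)_+$), so all matching objects are terminal and the Reedy model structure on $\rcsset$, whose existence is already guaranteed by Reedy's theorem \cite{Re} and whose simpliciality is Angeltveit's theorem \cite{An}, automatically has levelwise fibrations and weak equivalences; it therefore \emph{is} the projective structure. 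This is exactly the content of the subsequent Proposition on $\cl{R}=\cl{R}_+$ and its Corollary. Your approach buys generality and self-containedness --- it works over any small indexing category, not just a direct one, and does not presuppose the Reedy machinery --- at the cost of re-running the cofibrant-generation checklist. The paper's approach buys economy and, more importantly, delivers the identification of the projective structure with the Reedy structure as a byproduct, which is what is actually needed later (Reedy cofibrancy of $r\Delta^\bullet$ and the $\holim$ comparison). The one point you flag as an obstacle, namely that the simplicial enrichment is the levelwise one, is indeed the case here (the tensor is $(X\otimes K)^n=X^n\times K$ with $K$ constant), so your reduction of SM7 to $\sset_{K-Q}$ goes through.
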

\end{section}
Suppose $\cl{R}$ is a Reedy category and $\cl{M}$ is a model category. In general, if the projective model structure on $\cl{M}^\cl{R}$ exists (e.g. when $\cl{M}$ is sufficiently nice) it will be very different than the Reedy model structure. However, in special cases the two may coincide. 
\begin{prop}
If $\cl{R}=\cl{R}_+$ the projective and Reedy model structures on $\cl{M}^\cl{R}$ coincide.
\end{prop}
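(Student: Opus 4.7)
The plan is to show that under the hypothesis $\cl{R}=\cl{R}_+$ the Reedy fibrations and weak equivalences coincide respectively with the projective fibrations and weak equivalences; since a model structure is determined by its weak equivalences and fibrations (the cofibrations are forced by the left lifting property against trivial fibrations), this will imply that the two structures agree as model categories.

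First I would unpack what $\cl{R}=\cl{R}_+$ forces on $\cl{R}_-$. The unique factorization axiom in the Reedy definition implies that $\cl{R}_-$ must consist solely of identity morphisms: indeed, any morphism $f \in \cl{R}_-$ factors as itself (in $\cl{R}_-$) followed by an identity (in $\cl{R}_+$), but also, since $f \in \cl{R}_+$ by hypothesis, as an identity followed by $f$; uniqueness of the factorization then forces $f$ to be an identity. Consequently, for every object $n \in \cl{R}$, the under category $n/\cl{R}_-$ has only the single object $id_n$, so the subcategory $\mathbb{M}(\cl{R})$ obtained by removing $id_n$ is empty. The matching object $M^nX = \lim_{\mathbb{M}(\cl{R})} X^s$ is therefore the terminal object $*$ of $\cl{M}$ for every $n$ and every $X$.

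The next step is to read off the characterization of Reedy fibrations in this situation. The condition that $X^n \rar M^nX \times_{M^nY} Y^n$ be a fibration collapses to $X^n \rar * \times_* Y^n = Y^n$ being a fibration for every $n$, which is precisely the projective fibration condition. Since weak equivalences in both the Reedy and the projective model structures are by definition the levelwise weak equivalences, the classes of weak equivalences coincide tautologically.

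Finally, I would invoke the standard fact that a model structure on a given category is determined by its classes of weak equivalences and fibrations, since the cofibrations are exactly the maps with the left lifting property against trivial fibrations. Having matched both classes, the two model structures on $\cl{M}^\cl{R}$ coincide. There is no substantive obstacle here; the only subtlety worth stating carefully is the small factorization argument showing that $\cl{R}_-$ reduces to the identities when $\cl{R}=\cl{R}_+$, as this is the fact that makes $M^nX$ terminal and collapses the Reedy fibration condition to the projective one.
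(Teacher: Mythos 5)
Your proof is correct and follows essentially the same route as the paper's: both identify that the matching category is empty when $\cl{R}=\cl{R}_+$, so $M^nX$ is terminal, the Reedy fibrations reduce to levelwise fibrations, and the two structures share weak equivalences and fibrations and hence coincide. Your extra justification that $\cl{R}_-$ consists only of identities is a welcome elaboration of a step the paper leaves implicit, but it does not change the argument.
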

\begin{proof}
In this case, for every $X\in \cl{M}$ the $n$-th matching object $M^nX$ equal the terminal object, being the limit over the empty diagram, so that a map $X\rar Y$ is a Reedy fibration iff $X^n\rar Y^n$ is a fibration in $\cl{M}$. This means that the two model structures have the same classes of weak equivalences and fibrations, and hence coincide.    
\end{proof}

For $\cl{R}=\Sim_{r}$ we obtain:
\begin{cor}\label{restricted simplex}
The Reedy and projective model structures on $\rcsset$ coincide.
Thus, an object $X\in \rcsset$ is Reedy fibrant iff $X^n$ is a Kan complex for each $n$.
\end{cor}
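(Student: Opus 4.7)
The plan is to derive this as a direct instance of the preceding Proposition, applied to $\cl{R}=\Sim_r$ and $\cl{M}=\sset_{K-Q}$. The only thing one really has to check is that $\Sim_r$ satisfies the hypothesis $\cl{R}=\cl{R}_+$. The Reedy structure on $\Sim_r$ is inherited from that on $\Sim$: morphisms in $(\Sim_r)_+$ are compositions of face maps $d^i$, and morphisms in $(\Sim_r)_-$ are the surjective ones lying in $\Sim_r$. But every morphism in $\Sim_r$ is by definition injective, so the only surjection it contains is the identity. Hence $(\Sim_r)_-$ is trivial and $\Sim_r=(\Sim_r)_+$, and the Proposition applies.

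Next I would apply the Proposition verbatim: the $n$-th matching object $M^nX$ is a limit over the empty category $n/(\Sim_r)_-\setminus\{id_n\}$, hence terminal in $\sset$, so the Reedy fibration condition $X^n\rar M^nX\times_{M^nY}Y^n$ reduces to the ordinary (levelwise) Kan fibration condition $X^n\rar Y^n$. The classes of weak equivalences already agree by definition (both are levelwise Kan--Quillen weak equivalences). Coincidence of fibrations and weak equivalences forces coincidence of cofibrations, so the two model structures on $\rcsset$ agree.

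For the ``thus" clause: in the projective model structure, $X\in\rcsset$ is fibrant iff the map $X\rar\ast$ is a levelwise Kan fibration, i.e., iff each $X^n$ is a Kan complex. Transporting this characterization across the equality of model structures gives the stated Reedy fibrancy criterion.

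There is no real obstacle; the only thing to be careful about is to notice that the Reedy structure on $\Sim_r$ has trivial ``minus part", which is immediate from injectivity. Everything else is a direct unwinding of definitions.
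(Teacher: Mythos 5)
Your proposal is correct and matches the paper's own route exactly: the corollary is stated there as an immediate instance of the preceding Proposition for $\cl{R}=\Sim_r$, with the key (implicit) observation being precisely yours, namely that $\Sim_r=(\Sim_r)_+$ since all morphisms are injective and hence the minus part is trivial. Your unwinding of the matching objects and the fibrancy criterion is the intended argument.
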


\begin{rem}\label{rDelta is cofibrant}
By example \ref{simplex}, $\Delta^\bullet$ is Reedy cofibrant in $\csset$ and since the indexing category defining $L^n\Delta^\bullet$ depends only on $\Sim_+=\Sim_{r}$, we have $L^n\Delta^\bullet =L^n r\Delta^\bullet$. Thus, the map $L^n r\Delta^n\rar r\Delta^n$ is again the inclusion $\partial \Delta^n \hrar \Delta^n$ so that $r\Delta^\bullet$ is Reedy cofibrant in $\rcsset$.
\end{rem}

\begin{section}{2-Groupoids}\label{2-groupoids}

\begin{defn}
A (strict) \emph{2-groupoid} is a groupoid-enriched (small) category in which all morphisms are invertible.

Explicitly, a 2-groupoid consists of:
\begin{itemize}
\item
a set of \emph{objects};
\item
for every pair of objects $x,y$, a set of \emph{1-morphisms}, written as $f:x\rar y$; and, for every object $x$, a distinguished 1-morphism $1_x:x\rightarrow x$; 
\item
for every pair of 1-morphisms $f,g:x\rightarrow y$ a set of \emph{2-morphisms}, written as $a:f\Rightarrow g$; and, for every 1-morphism $f$, a distinguished 2-morphism $1_f:f\Rightarrow f$
\end{itemize} 
together with a composition law for 1-morphisms and vertical and horizontal composition laws for 2-morphisms (denoted by $*$ and $\circ$ respectively) subject to three axioms, expressing associativity of composition and left and right unit laws and in addition satisfy the `interchange law': $$(b *a)\circ (b' *a')=(b'\circ b)*(a'\circ a).$$ All morphisms are invertible with respect to these composition laws.
\end{defn}

There are 2-categorical analogues for the notions of a functor and natural transformation. However, since 2-categories have 2-morphisms, an additional `level of arrows' reveals itself, namely, the one of modifications. There is some ambiguity regarding these notions, since one can consider also their weak versions. For the sake of clarity, we spell out the definitions we use, which are taken from \cite[I,2.2;I,2.3]{Gr}.
\begin{defn}\label{cartesian closed structure}
Let $\G,\H$ be a pair of 2-groupoids. 
\begin{enumerate}[(I)]
\item
A (strict) \emph{2-functor} $\Phi:\G\rar \H$ is a groupoid-enriched functor between the underlying groupoids of $\G$ and $\H$. Explicitly, $\Phi$ assigns:
\begin{itemize}
\item to each object $x\in \G$, an object $\Phi x\in \H$,
\item to each 1-morphism $f:x\rar y\in \G$, a 1-morphism $\Phi f:\Phi x\rar \Phi y\in \H$,
\item to each 2-morphism $a:f\Rightarrow g\in \G$ a 2-morphism $\Phi a:\Phi f\Rightarrow \Phi g\in \H$
\end{itemize}
and this assignment respects all compositions and units.  

\item
Given a pair of 2-functors $\Phi,\Psi:\G\rar \H$ between 2-groupoids, a (strict) \emph{2-natural transformation} $\Theta:\Phi\Rightarrow \Psi$ consists of a 1-morphism $\eta_x:\Phi x\Rightarrow \Psi x$ for every object $x\in \G$ which is natural in the sense that for every 2-morphism $a:f\Rightarrow g$ in $\G$, the diagram 
$$\xymatrix{\Phi(x)\ar[d]_{\eta_x}\rtwocell_{\Phi f}^{\Phi g}{\Phi a} & \Phi(y)\ar[d]^{\eta_y} \\ \Psi(x)\rtwocell^{\Psi f}_{\Psi g}{\Psi a} & \Psi(y)}$$ is commutative in that $1_{\eta_y}\circ \Phi a= \Psi a \circ \eta_x$ as 2-morphisms in $\H$.

\item Given a pair of 2-natural transformations $\eta,\theta:\Phi\Rightarrow \Psi$, a (strict) \emph{modification} $\mu:\eta \Rrightarrow \theta$ consists of a 2-morphism $\mu_x:\eta_x\Rightarrow \theta_x$ in $\H$ for every object $x\in \G$ such that for every 1-morphism $f:x\rar y$ in $\G$, the diagram 
$$\xymatrix{\Phi(x)\ar[d]_{\Phi f}\rtwocell_{\eta_x}^{\theta_x}{\mu_x} & \Psi(x)\ar[d]^{\Psi f} \\ \Phi(y)\rtwocell^{\eta_y}_{\theta_y}{\mu y} & \Psi(y)}$$ is commutative in the sense of (II).
\end{enumerate}
\end{defn}

We denote by $\tgpd$ the category of 2-groupoids and strict 2-functors between them. The collection of 2-functors from $\G$ to $\H$, their 2-natural transformations and their modifications is naturally a 2-category (see \cite[2.3]{Gr}) which is in fact a 2-groupoid because of invertibility of 1-and 2-morphisms in the codomain $\H$. We denote this $2$-groupoid by $\uline{\tgpd}(\G,\H)$. 
\begin{thm}\label{cartesian closeness}{(cf. \cite[2.3]{Gr})}
The category $\tgpd$ is cartesian closed with respect to $\uline{\tgpd}(\G,\H)$. 
\end{thm}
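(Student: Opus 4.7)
The plan is to produce, for any 2-groupoids $\cl{K}, \G, \H$, a natural bijection
\[
\tgpd(\cl{K}\times\G,\H)\cong \tgpd(\cl{K},\uline{\tgpd}(\G,\H))
\]
by writing down the exponential transpose and its inverse explicitly and then checking naturality in $\cl{K}$ and $\H$. Since every structure involved is strict, this is essentially the 2-groupoid case of the standard strict internal hom for 2-categories; the extra content is to see that the construction preserves invertibility of 1- and 2-cells.

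\textbf{Forward transpose.} Given a 2-functor $F:\cl{K}\times\G\to\H$, define $\hat{F}:\cl{K}\to\uline{\tgpd}(\G,\H)$ by partial evaluation: set $\hat F(k):=F(k,-)$; for a 1-morphism $\phi:k\to k'$, let $\hat F(\phi)$ be the 2-natural transformation with component $F(\phi,1_g)$ at $g\in\G$; for a 2-morphism $\mu:\phi\Rightarrow\phi'$, let $\hat F(\mu)$ be the modification with component $F(\mu,1_{1_g})$. The 2-naturality condition for $\hat F(\phi)$ and the modification axiom for $\hat F(\mu)$ both reduce to applying 2-functoriality of $F$ to the trivial decomposition $(1_{k'},a)\circ(\phi,1_g)=(\phi,1_{g'})\circ(1_k,a)$ in $\cl{K}\times\G$, where $a$ is a 1- or 2-cell of $\G$ as appropriate; functoriality of $\hat F$ itself is similarly immediate.

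\textbf{Inverse transpose and main obstacle.} Given $\Psi:\cl{K}\to\uline{\tgpd}(\G,\H)$, define $\check\Psi:\cl{K}\times\G\to\H$ on objects by $\check\Psi(k,g):=\Psi(k)(g)$, on a 1-morphism $(\phi,f):(k,g)\to(k',g')$ by
\[
\check\Psi(\phi,f):=\Psi(\phi)_{g'}\circ\Psi(k)(f)=\Psi(k')(f)\circ\Psi(\phi)_{g},
\]
the equality holding by 1-naturality of $\Psi(\phi)$, and on 2-morphisms by the analogous formula involving the modification components. The main bookkeeping step is verifying 2-functoriality of $\check\Psi$ on a composite $(\phi',f')\circ(\phi,f)$: this unfolds to a six-fold composite in $\H$ that must be collapsed to $\check\Psi(\phi'\phi,f'f)$ using the 2-naturality of $\Psi(\phi')$ together with the interchange law in $\H$. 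This is the only place where genuine computation is needed, and is of the same flavor as the strict 2-category argument in \cite[I.2.3]{Gr}.

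\textbf{Closure under invertibility and naturality.} To confirm that $\uline{\tgpd}(\G,\H)$ is a 2-groupoid (and not merely a 2-category) one inverts every 2-natural transformation $\eta$ componentwise in $\H$; the naturality square for the inverse family is obtained from that of $\eta$ by inverting a commuting square in $\H$, and modifications are handled analogously. Mutual inverseness of $F\mapsto\hat F$ and $\Psi\mapsto\check\Psi$ is immediate from unwinding definitions on objects and on 1- and 2-cells, and naturality of the resulting bijection in $\cl{K}$ and $\H$ is straightforward because every construction was built pointwise. Together these checks establish the cartesian closure.
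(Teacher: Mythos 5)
Your proposal is correct: it is precisely the standard strict exponential-transpose argument that the paper delegates entirely to the citation \cite[2.3]{Gr}, supplemented by the componentwise-inversion observation that the paper itself makes in the sentence preceding the theorem (that $\uline{\tgpd}(\G,\H)$ is a 2-groupoid, not merely a 2-category, because 1- and 2-cells in $\H$ are invertible). Since the paper offers no written proof beyond that remark and the reference, your write-up is a faithful expansion of the intended argument rather than a different route.
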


Let $\Sim_{\leq n}$ be the full subcategory of $\Sim$ with objects $[0],...,[n]$ and let $\sset_{\leq n}$ be the category of functors $(\Sim_{\leq n})^{op}\rar \mathcal{S}et$. Objects of $\sset_{\leq n}$ are called \emph{$n$-truncated simplicial sets}. The inclusion $\Sim_{\leq n}\rar \Sim$ induces a `truncation functor' $tr_n:\sset\rar \sset_{\leq n}$ which admits right and left adjoints $cosk_n:\sset_{\leq n}\rar \sset$ and $sk_n:\sset_{\leq n}\rar \sset$ respectively. We denote by $Cosk_n:\sset\rar \sset$ the composition $cosk_n\circ tr_n$ and by $Sk_n$ the composition $sk_n\circ tr_n$. The functor $Sk_n$ takes a simplicial set and creates a new simplicial set from its $n$-truncation by adding degenerate simplices in all levels above $n$; it is the simplicial analogue of the $n$-skeleton of a $CW$ complex. The functor $Cosk_n$ has a more involved simplicial description; it is the simplicial analogue of the $(n-1)$th Postnikov piece $P_{n-1}$. \\
By abstract considerations, one can show that $Cosk_n$ is right adjoint to $Sk_n$. 
Thus, a map $X\rar Cosk_n Y$ correspond precisely to a map $Sk_n X\rar Y$. 
\begin{defn}
A simplicial set $X$ is called \emph{$n$-coskeletal} if the canonical map $X\rar Cosk_n X$ is an isomorphism. 
\end{defn}

In particular, given an $n$-truncated simplicial set $X$, $cosk_n X$ is an \linebreak$n$-coskeletal simplicial set. Thus, in order to define an $n$-coskeletal simplicial set it is enough to define its $n$-truncation. 

In \S\ref{vertices and homotopies} we intend to interpret the definition of descent data in terms of the 2-nerve. In order to improve readability, we now rewrite the definition of \cite[\S2]{MS} with the notations relevant for our formulae. 

\begin{defn}\label{2-nerve def}
The \emph{2-nerve} is the functor $\N:\tgpd\rar \sset$ which takes a 2-groupoid $\G$ to the 3-coskeletal simplicial set $\N\G$ whose
\begin{itemize}
\item 0-simplices are the objects of $\G$;
\item 1-simplices are the morphisms of $\G$;
\item 2-simplices are triangles of the form 
\[
\xymatrix @=0.65pc{& & x_1\ar[ddrr]^{g_{12}} & &\\& & & & \\ x_0\ar@{}[uu]_(0.4){\;\;\;\;\;\;\;\;\;\;\;\;\;\;\;\;\LLUparrow{a}_{012}}\ar[uurr]^{g_{02}}\ar[rrrr]_{g_{01}} & & & & x_2 }
\]

where $g_{ij}:x_i\rightarrow x_j$ and $\alpha:g_{02}\Rightarrow g_{12}\circ g_{01}$ are 1-and 2-morphisms (respectively) in $\G$;
\item 3-simplices are commutative tetrahedra of the form

$\xymatrix@!=2pc{ & x_3 & \\ & x_1\ar@{}[u]^(0.2){\;\;\;\;\overset{\LRightarrow}{a_{013}}}_(0.2){\overset{\LRightarrow}{a_{123}}}\ar[u]|{g_{13}}\ar[dr]|(0.3){g_{12}} & \\ x_0\ar@{}[ur]|{\overset{\LLRightarrow}{a_{023}}}\ar[uur]^{g_{03}}\ar@{.>}[ur]|(0.7){g_{01}}\ar[rr]|{g_{02}}\ar@{}[u]_(0.5){\;\;\;\;\;\;\;\;\;\;\;\;\;\;\;\;\;\;\;\;\;\LLUparrow a_{012}} & & x_2\ar[uul]_{g_{23}}}$  $\xymatrix{\\a_{ijk}:g_{ik}\Rightarrow g_{jk}\circ g_{ij}.}$

Commutativity of this tetrahedron means that the diagram of 2-morphisms

\begin{equation}\label{tetrahedron commutativity}
\xymatrix@=1.5pc{g_{03}\ar[rr]^{a_{023}}\ar[d]_{a_{013}}  && g_{23}\circ g_{02}\ar[d]^{1_{g_{23}}\circ a_{012}} \\ g_{13}\circ g_{01}\ar[rr]_{a_{123}\circ 1_{g_{01}}} && g_{23}\circ g_{12}\circ g_{01}}
\end{equation}
commutes.

\end{itemize}
\end{defn}

We will need four well-known properties of the 2-nerve:
\begin{prop}\cite{MS}\label{2-nerve}
\begin{enumerate}
\item $\N$ preserves products.
\item For every 2-groupoid $\G$, $\N\G$ is a Kan complex.
\item A map of 2-groupoids $\G\rar \H$ is a weak equivalence iff $\N\G\rar \N\H$ is a weak equivalence of simplicial sets.
\item $\N$ admits a left adjoint $\W:\sset\rar \tgpd$, called the \emph{Whitehead 2-groupoid}. \label{W}
\end{enumerate}
\end{prop}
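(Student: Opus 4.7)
The plan is to verify the four claims of the proposition by unwinding Definition \ref{2-nerve def} rather than by any abstract machinery; parts (1)--(3) rest on direct inspection of low-dimensional simplices, and (4) is formal.

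For part (1), I would use that $\N\G$ is $3$-coskeletal: since $Cosk_3$ is a right adjoint and in particular preserves products, it suffices to show that $tr_3 \N$ preserves products, and by inspection a $k$-simplex of $\N(\G \times \H)$ for $k \in \{0,1,2,3\}$ is literally a pair consisting of a $k$-simplex of $\N\G$ and one of $\N\H$ (the commutativity condition \eqref{tetrahedron commutativity} for a tetrahedron in $\G \times \H$ decomposes componentwise because vertical and horizontal composition in a product 2-groupoid are defined componentwise).

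For parts (2) and (3), I would first record the standard identification of the homotopy groups: $\pi_0 \N\G$ is the set of isomorphism classes of objects of $\G$; $\pi_1(\N\G, x)$ is the group of $2$-isomorphism classes of $1$-automorphisms of $x$; and $\pi_2(\N\G, x)$ is the group of $2$-automorphisms of $1_x$. The Kan condition in (2) is then checked horn by horn: for $n \leq 3$ one uses invertibility of $1$-morphisms (for $\Lambda^2_k$) and of $2$-morphisms (for $\Lambda^3_k$) to construct fillers, reading off the missing face from \eqref{tetrahedron commutativity}; for $n > 3$ the fillers exist and are unique by $3$-coskeletality, since a horn $\Lambda^n_k$ with $n \geq 4$ already contains all its $3$-faces and hence uniquely determines a map out of $\Delta^n$. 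Given this, (3) reduces to showing that a strict $2$-functor $F\colon \G \rar \H$ is a weak equivalence of $2$-groupoids in the usual sense (essentially surjective on objects, essentially surjective on $1$-morphisms between fixed objects modulo $2$-morphisms, and bijective on $2$-automorphism groups) if and only if it induces isomorphisms on $\pi_0, \pi_1, \pi_2$ under the dictionary above; this is a term-by-term translation, using that $\N\G$ has no nontrivial higher homotopy by $3$-coskeletality together with the fact that the $\pi_2$-group described above accounts for all of $\pi_2$.

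For part (4), I would invoke the adjoint functor theorem. The category $\tgpd$ is locally presentable (it is the category of models for an essentially algebraic theory), and it is straightforward to check that $\N$ preserves all small limits, since limits in $\tgpd$ are computed levelwise on objects, $1$-morphisms and $2$-morphisms while the simplices of $\N\G$ are defined by universal diagrams in those same data. Hence $\N$ has a left adjoint $\W$. Alternatively, one may give an explicit construction: present a simplicial set $X$ via its $3$-truncation, let $\W X$ be the free $2$-groupoid on the generators given by the $0$-, $1$-, $2$-simplices modulo the relations imposed by the $3$-simplices under \eqref{tetrahedron commutativity}, and verify the adjunction $\tgpd(\W X, \G) \cong \sset(X, \N\G)$ by the universal property of freeness together with $3$-coskeletality of $\N\G$.

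The main obstacle, which I would expect to absorb most of the work, is the Kan filling for $\Lambda^3_k$: one must piece together three given filled triangles in $\G$ and produce a fourth, and then verify that the resulting tetrahedron satisfies \eqref{tetrahedron commutativity}. This is exactly the point at which the interchange law and the invertibility of $2$-morphisms must be combined, and careful bookkeeping is needed to check the inner versus outer horns separately.
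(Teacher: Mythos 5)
The paper offers no proof of this proposition at all: it is quoted from Moerdijk--Svensson \cite{MS}, so any direct verification is by default a different route. Your overall strategy (coskeletality for products, explicit horn-filling for the Kan condition, a homotopy-group dictionary for weak equivalences, and the adjoint functor theorem or a generators-and-relations construction for $\W$) is the standard one, and parts (1), (3) and (4) are essentially fine modulo two small imprecisions: for (4) via the adjoint functor theorem you should also record that $\N$ is accessible (it preserves filtered colimits, since all of its defining data is finitary), and in (3) the vanishing of $\pi_3$ does not follow from $3$-coskeletality alone, which only kills $\pi_k$ for $k\geq 4$, but from the stronger fact that a $3$-simplex of $\N\G$ is a \emph{condition} on its boundary rather than extra structure.

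The genuine gap is in part (2), at the horns $\Lambda^4_k$. You assert that for $n>3$ fillers exist and are unique by $3$-coskeletality ``since a horn $\Lambda^n_k$ with $n\geq 4$ already contains all its $3$-faces.'' This is false for $n=4$: the nondegenerate simplices of $\Delta^n$ absent from $\Lambda^n_k$ are exactly the top cell and the $k$-th $(n-1)$-face, so $\Lambda^4_k$ is missing the $3$-simplex $[0,\dots,\hat{k},\dots,4]$ and does not contain $Sk_3\Delta^4$. Filling a $\Lambda^4_k$-horn therefore requires proving that the fifth tetrahedron, all of whose $2$-faces are determined by the horn, is commutative in the sense of \eqref{tetrahedron commutativity} given that the other four are; this associativity/interchange coherence computation is the real content of the Kan condition for $\N$, and it is exactly the step your argument skips. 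By contrast, the $\Lambda^3_k$ fillings you single out as the main obstacle are comparatively easy: one solves \eqref{tetrahedron commutativity} for the missing $2$-morphism using invertibility, and commutativity of the resulting tetrahedron holds by construction. For $n\geq 5$ your coskeletality argument is correct, since the missing face then has dimension at least $4$ and $\Lambda^n_k$ does contain $Sk_3\Delta^n$.
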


The category $\tgpd$ admits a natural simplicial enrichment via $\N\uline{\tgpd}(-,-)$. This enrichment is nicely behaved in the following sense:
\begin{prop}\label{2gpd is t-c}
The simplicially-enriched category $\tgpd$ is tensored and cotensored over $\sset$.
\end{prop}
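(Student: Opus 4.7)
The plan is to define the tensor and cotensor of $\G, \H \in \tgpd$ with $K \in \sset$ using the Whitehead 2-groupoid $\W$ of Proposition \ref{2-nerve}(\ref{W}):
\[
\G \otimes K := \G \times \W K, \qquad \H^K := \uline{\tgpd}(\W K, \H).
\]
Both are 2-groupoids by the cartesian closedness of $\tgpd$ (Theorem \ref{cartesian closeness}), which also supplies an underlying adjunction $\uline{\tgpd}(\G \otimes K, \H) \cong \uline{\tgpd}(\G, \H^K)$ in $\tgpd$ on the nose.

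To verify the enriched adjunctions
\[
\N\uline{\tgpd}(\G \otimes K, \H) \;\cong\; \uline{\sset}\bigl(K,\, \N\uline{\tgpd}(\G, \H)\bigr) \;\cong\; \N\uline{\tgpd}(\G, \H^K),
\]
I would compute both outer simplicial sets in degree $n$. Using $(\N\G')_n = \tgpd(\W\Delta^n, \G')$ (from $\W \dashv \N$) together with cartesian closedness, both outer terms have $n$-th level $\tgpd(\W\Delta^n \times \G \times \W K, \H)$. The middle term, on the other hand, unfolds via the definition of the internal hom in $\sset$ and the adjunction $\W \dashv \N$ to give $\tgpd(\W(K \times \Delta^n) \times \G, \H)$. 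By the Yoneda lemma applied in $\H$, all three expressions agree precisely when $\W(K \times \Delta^n) \cong \W K \times \W\Delta^n$ naturally, i.e.\ when $\W$ preserves binary products.

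The main obstacle is therefore to establish product-preservation of $\W$. I would prove this directly from the generators-and-relations description of $\W K$: its objects are the vertices of $K$, its 1-morphisms are generated by edges modulo the triangular relations extracted from 2-simplices (as in Definition \ref{2-nerve def}), and its 2-morphisms are generated by 2-simplices modulo the tetrahedron commutativity relation (\ref{tetrahedron commutativity}) extracted from 3-simplices. The canonical comparison 2-functor $\W(K \times L) \to \W K \times \W L$ is clearly a bijection on objects. Surjectivity at the 1- and 2-morphism levels is obtained by writing an arbitrary pair $(a, b)$ as a composite of two generators whose second (resp.\ first) component is appropriately degenerate, realised by a degenerate simplex of $K \times L$ (mirroring the classical argument that $\Pi_1$ preserves products). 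Injectivity reduces to matching the 3-simplex relations on both sides. The combinatorial bookkeeping at the 2-morphism level, where horizontal and vertical composites of pairs of 2-morphisms must be realised by a single 2-simplex of the product, is the technical heart of the argument.
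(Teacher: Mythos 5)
Your tensor and cotensor are exactly the ones the paper uses, and your reduction is the right one: with $\G\otimes K=\G\times\W K$ the simplicial-level adjunction isomorphisms hold if and only if the canonical comparison $\W(K\times\Delta^n)\rar\W K\times\W\Delta^n$ is an isomorphism. The gap is that this last statement is false, so the step you defer as ``combinatorial bookkeeping'' cannot be carried out. The analogy with $\Pi_1$ breaks precisely at the dimension shift: for $\Pi_1$ (or $\tau_1$) a $2$-simplex imposes the \emph{relation} $g_{02}=g_{12}\circ g_{01}$ among $1$-morphisms, and it is these relations that collapse the free groupoid on the edges of $K\times L$ onto $\Pi_1K\times\Pi_1L$. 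For $\W$ a $2$-simplex contributes only an invertible $2$-cell $a_{012}\colon g_{02}\Rightarrow g_{12}\circ g_{01}$, so the $1$-morphisms of $\W(K\times L)$ form the \emph{free} groupoid on the nondegenerate edges of $K\times L$, with no identifications at all. Already for $K=L=\Delta^1$: the square with its diagonal has five nondegenerate edges on four vertices, so each object of $\W(\Delta^1\times\Delta^1)$ has a free group on two generators worth of $1$-automorphisms, whereas $\W\Delta^1\times\W\Delta^1$ is the chaotic groupoid on four objects with only identity $2$-cells; the comparison $2$-functor is an equivalence but is far from injective on $1$-morphisms. The failure propagates to the adjunction you are trying to verify: for $\H$ the $2$-group with one object, one $1$-morphism and $2$-morphism group $\ZZ$, one has $\tgpd(\W\Delta^1\times\W\Delta^1,\H)=\ast$ while $\sset(\Delta^1\times\Delta^1,\N\H)\cong\ZZ\times\ZZ$, so $\N\uline{\tgpd}(\G\otimes\Delta^1,\H)$ and $\uline{\sset}(\Delta^1,\N\uline{\tgpd}(\G,\H))$ already differ in simplicial degree $1$ (they are only weakly equivalent).

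For comparison, the paper's own proof makes the same choices but stops earlier: it only records that $K\mapsto\W K\times\G$ and $K\mapsto\uline{\tgpd}(\W K,\H)$ are left adjoints of $\N\uline{\tgpd}(\G,-)$ and $\N\uline{\tgpd}(-,\H)$ at the level of underlying categories, and does not address the associativity isomorphism $\G\otimes(K\times L)\cong(\G\otimes K)\otimes L$ required in \cite[II,2.1]{GJ} (equivalently, the simplicial exponential law of \cite[II,2.5]{GJ}), which your computation shows is exactly product-preservation of $\W$. So your more careful unwinding has located a genuine subtlety rather than a routine verification: it cannot be closed by a sharper combinatorial argument for the same comparison map, and any repair must either weaken the conclusion to the underlying unenriched adjunctions or change the simplicial enrichment. (This is consistent with the paper's own remark that, by \cite[5.1]{No}, this enrichment fails to make $\tgpd$ a simplicial model category.)
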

\begin{proof}
We need to verify the conditions of \cite[II,2.1]{GJ}. The functor $((-)\times \G)\circ \W$ is a left adjoint to $\N\tgpd(\G,-)$ and the functor $\uline{\tgpd}(W(-),\H)$ is a left adjoint to $\N\uline{\tgpd}(-,\H)$. 
\end{proof}

\begin{rem}
It is worth notice that \cite[5.1]{No} shows the inner hom described in \ref{cartesian closed structure} does not induce a simplicial model category structure on $\tgpd$ via setting the simplicial mapping space to be $\N\uline{\tgpd}(\G,\H)$. However in the current note, the main homotopical part is done in the category of (cosimplicial) simplicial sets so that we do not need a full-fledged homotopy theory of $\tgpd$.
\end{rem}

We shall need a slight generalization of proposition \ref{2gpd is t-c}:
\begin{prop}
If $\C$ is tensored and cotensored over $\sset$ and $I$ is any small category, then the functor category $\C^I$ is again tensored and cotensored over $\sset$.
\end{prop}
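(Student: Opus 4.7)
The plan is to define all structure on $\C^I$ pointwise and then reduce the required adjunctions to the pointwise ones in $\C$ by invoking an end/natural-transformation argument.

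First, given $F\in \C^I$ and $K\in\sset$, I would define the tensor and cotensor by the levelwise formulas $(F\otimes K)(i):=F(i)\otimes K$ and $(F^K)(i):=F(i)^K$, with the action on a morphism $\alpha:i\to j$ in $I$ given by $F(\alpha)\otimes 1_K$ and $F(\alpha)^{1_K}$ respectively. This is functorial in $i$ because $(-)\otimes K$ and $(-)^K$ are functors on $\C$, and it is functorial in $F$ and in $K$ by the bifunctoriality of the tensor and cotensor on $\C$. Second, I would endow $\C^I$ with the simplicial enrichment given by the end
$$\uline{\C^I}(F,G)\;:=\;\int_{i\in I}\uline{\C}(F(i),G(i)),$$
which exists in $\sset$ since $\sset$ is complete. (One checks this assembles into a $\sset$-enriched category using the standard fact that ends compose, giving a composition map induced from composition in the enrichment of $\C$.)

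Next I would verify the two defining adjunctions. For any $K\in\sset$ and $F,G\in \C^I$, the chain of natural bijections
\begin{align*}
\sset\!\left(K,\uline{\C^I}(F,G)\right)
  &\cong \int_{i\in I}\sset(K,\uline{\C}(F(i),G(i))) \\
  &\cong \int_{i\in I}\C(F(i)\otimes K,G(i)) \\
  &\cong \C^I(F\otimes K,G)
\end{align*}
uses, in turn, that $\sset(K,-)$ preserves limits (hence ends), the pointwise tensor-hom adjunction in $\C$, and the standard identification of natural transformations between functors $I\to\C$ as an end of the hom functor. A completely analogous chain, using the cotensor-hom adjunction on $\C$, gives $\C^I(F,G^K)\cong\sset(K,\uline{\C^I}(F,G))$, which exhibits $(-)^K$ as right adjoint to $\uline{\C^I}(-,G)$ on $F$ as well. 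This establishes both the tensor and cotensor adjunctions required by \cite[II.2.1]{GJ}.

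The only real content is bookkeeping: one must check that every isomorphism above is natural in the three variables $F,G,K$, which follows from the naturality of the pointwise adjunctions in $\C$ and of the end construction. The step I expect to require the most care is verifying that the end-based mapping space $\uline{\C^I}(F,G)$ truly assembles into a $\sset$-enriched category with associative, unital composition; this is where one uses that limits commute with limits (so that composition of ends is again an end) and that the pointwise composition in $\C$ is enriched. Once that is in place, all the adjunctions drop out formally from the $\C$-level adjunctions, so there is no additional homotopical input needed.
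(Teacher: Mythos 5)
Your proposal is correct and follows essentially the same route as the paper: both define the tensor and cotensor levelwise and then exhibit the simplicial enrichment making these adjoint. The only (cosmetic) difference is that you present the mapping space as the end $\int_{i\in I}\uline{\C}(F(i),G(i))$ and verify the adjunctions formally, whereas the paper defines its $n$-simplices directly as $\C^I(F\otimes\Delta^n,G)$; these two simplicial sets are canonically isomorphic, so the arguments coincide in substance.
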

\begin{proof}
Denote the inner homs and their adjoints by 

$\xymatrix{\C\ar@<-0.5ex>[r]_{\uline{\C}(X,-)} & \sset\ar@<-0.5ex>[l]_{X\otimes(-)}}$ and $\xymatrix{\C^{op}\ar@<-0.5ex>[r]_{\uline{\C}(-,Y)} & \sset\ar@<-0.5ex>[l]_{Y^{(-)}}}$. 

One defines for $\undertilde{X}\in \C^I$ and $K\in \sset$, $(\undertilde{X}\otimes K)_\alpha :=\undertilde{X}_\alpha\otimes K$ and $(\undertilde{X}^K)_\alpha:=(\undertilde{X}_\alpha)^K$ for every $\alpha\in I$. Then, $\uline{\C^I}(\undertilde{X},\undertilde{Y})_n=\C^I(\undertilde{X}\otimes\Delta^n, \undertilde{Y})$ with the obvious face and degeneracy maps provides the desired inner hom.  
\end{proof}
\begin{cor}\label{cosimplicial 2-groupoids are t-c}
The categories $\tgpd^\Sim$ and $\tgpd^{\Sim_r}$ are tensored and cotensored over simplicial sets.
\end{cor}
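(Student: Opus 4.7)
The plan is to simply invoke the preceding proposition. By Proposition \ref{2gpd is t-c}, the category $\tgpd$ is tensored and cotensored over $\sset$, with the tensor given by $((-)\times\G)\circ \W$ and cotensor by $\uline{\tgpd}(\W(-),\H)$. Both $\Sim$ and $\Sim_r$ are small categories (in fact, the objects are the finite nonempty ordinals and the morphism sets are finite). Thus setting $I=\Sim$ (respectively $I=\Sim_r$) in the preceding proposition yields the claim immediately, with the tensor and cotensor defined pointwise, i.e.
$$(\undertilde{\G}\otimes K)^{[n]} = \undertilde{\G}^{[n]}\otimes K, \qquad (\undertilde{\G}^K)^{[n]} = (\undertilde{\G}^{[n]})^K,$$
and with cofaces (and, in the unrestricted case, codegeneracies) induced by functoriality of tensor and cotensor in the first variable.

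There is essentially no obstacle: the only thing to check is that $\tgpd$ satisfies the hypothesis of the previous proposition, which is precisely the content of Proposition \ref{2gpd is t-c}, and that $\Sim$ and $\Sim_r$ are small, which is evident. One could, if desired, spell out that the resulting simplicial enrichment agrees with the one already in use, namely $\uline{\tgpd^I}(\undertilde{\G},\undertilde{\H})_n = \tgpd^I(\undertilde{\G}\otimes\Delta^n,\undertilde{\H})$ with face and degeneracy maps inherited from $\Delta^\bullet$, but this is forced by the adjunctions and requires no further argument.
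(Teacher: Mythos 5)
Your proof is correct and is exactly the paper's (implicit) argument: the corollary is stated as an immediate consequence of the preceding proposition applied with $\C=\tgpd$ (tensored and cotensored by Proposition \ref{2gpd is t-c}) and $I=\Sim$ or $\Sim_r$, with the pointwise tensor and cotensor you describe. Nothing further is needed.
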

The last corollary enables us to express the totalization as an end via Definition \ref{abstract tot}.

By abuse of notations, we denote by $\N,\;\W$ the prolongation of the 2-nerve and Whitehead 2-groupoid functors to the categories $\tgpd^{\Sim},\;\csset$ (respectively). Since (level-wise) coproducts define the tensoring (over $\sset$) in $\tgpd$ and $\csset$ and $\W$ commutes with coproducts, the premisses of \cite[Lemma 2.9(1)]{GJ} are satisfied and we have:
\begin{prop}\label{Enriched adjunction}
There is an enriched adjunction
$$\uline{\tgpd^{\Sim}}(\W \G^\bullet,\H^\bullet) \cong \uline{\csset}(\G^\bullet,\N\H^\bullet)$$
\end{prop}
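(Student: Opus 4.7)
The plan is to derive this enriched adjunction from the level-wise adjunction $\W \dashv \N$ by invoking \cite[Lemma 2.9(1)]{GJ}. That lemma upgrades an ordinary adjunction of simplicially enriched categories to an enriched one whenever the left adjoint is compatible with the simplicial tensoring; when the tensorings are built level-wise from coproducts, this reduces to preservation of coproducts by the left adjoint.

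First I would unpack the $n$-simplices on each side. By Corollary \ref{cosimplicial 2-groupoids are t-c} (and the prolongation of the simplicial enrichment via $\N\uline{\tgpd}(-,-)$ to functor categories), an $n$-simplex of the LHS is a morphism $\W\G^\bullet \otimes \Delta^n \rar \H^\bullet$ in $\tgpd^{\Sim}$, that is, a cosimplicial family of 2-functors $\W\G^m \otimes \Delta^n \rar \H^m$ compatible with cofaces and codegeneracies. An $n$-simplex of the RHS is a family of simplicial maps $\G^m \times \Delta^n \rar \N\H^m$ with the analogous compatibility.

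Second, I would apply the level-wise adjunction from Proposition \ref{2-nerve} to translate each family of maps $\G^m \times \Delta^n \rar \N\H^m$ on the RHS into a family of 2-functors $\W(\G^m \times \Delta^n) \rar \H^m$. The key identification to make is then $\W(\G^m \times \Delta^n) \cong \W\G^m \otimes \Delta^n$ in $\tgpd$, which amounts to $\W$ preserving the simplicial tensoring. I would verify the two hypotheses of the cited lemma: (i) the tensorings in $\tgpd$ and in $\csset$ are constructed level-wise from coproducts, and (ii) $\W$ preserves coproducts, being a left adjoint (Proposition \ref{2-nerve}). Together these give the desired bijection on each set of $n$-simplices.

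What remains is naturality in $[n] \in \Sim^{op}$, i.e.\ compatibility of the bijection with precomposition by simplicial operators $\Delta^m \rar \Delta^n$; this is immediate from the naturality of the unit and counit of $\W \dashv \N$. The main obstacle is the tensor-preservation step: although $\W$ does not preserve arbitrary products of 2-groupoids, re-expressing $\Delta^n$-tensorings via level-wise coproducts (as in \cite[Lemma 2.9(1)]{GJ}) bypasses this difficulty and only uses the colimit-preservation property of a left adjoint.
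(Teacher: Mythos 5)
Your proof is correct and follows essentially the same route as the paper: both arguments reduce the enriched adjunction to \cite[Lemma 2.9(1)]{GJ}, observing that the tensorings over $\sset$ in $\tgpd$ and $\csset$ are built level-wise from coproducts and that $\W$, being a left adjoint, preserves coproducts. The extra unpacking of $n$-simplices and the naturality check you include are just the content of the cited lemma made explicit.
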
  

\end{section}

\begin{section}{Descent data of cosimplicial 2-groupoids}\label{vertices and homotopies}
Following \cite{BGNT}, descent data of a cosimplicial crossed groupoid is defined in \cite{Ye1}. Since crossed groupoids can be viewed precisely as 2-groupoids (e.g. as a special case of \cite{BH1}), a translation leads to the following:

\begin{defn}(cf. \cite[Definition 1.4]{Ye1})
Given a cosimplicial 2-groupoid $\G^\bullet=\{\G^n\}$, a \emph{descent datum} is a triple $(x,g,a)$ in which:
\begin{enumerate}
\item $x$ is an object of $\G^0$;
\item $g:d^1x\rightarrow d^0x$ is a 1-morphism in  $\G^1$ and
\item $a:d^1g\Rightarrow d^0g\circ d^2g$ is a 2-morphism in $\G^2$.
\end{enumerate} 
such that 
\begin{equation}\label{tetrahedron}
\tag{twisted 2-cocycle}
(1_{d^1d^0g}\circ d^3 a)*d^1a=(d^0a \circ 1_{d^2d^2g})*d^2a. 
\end{equation}

\end{defn}
Let $(x,g,a)$ be a descent datum of $\G^\bullet$.
\newline
\newline 
Write $x_i\equiv x_i^{(1)}\; (i=0,1)$ for the object of $G^1$ corresponding to the vertex $(i)$ of $\Delta^1$, i.e. $x_i=d^jx$ where $\{j\}=\{0,1\}\setminus \{i\}$; thus $g:x_0\rightarrow x_1$.
\newline
\newline
Similarly, write $x_i\equiv x_i^{(2)}\; (i=0,1,2)$ and $g_{ij}\equiv g_{ij}^{(2)} \; (0\leq i<j\leq 2)$ for (respectively) the object and 1-morphism of $\G^2$ corresponding to the vertex $(i)$ and edge $(ij)$ of $\Delta^2$. In other words, $x_i =d^kd^jx$ where $\{j<k\}=\{0,1,2\}\setminus \{i\}$ and $g_{ij}=d^kg$ where $\{k\}=\{0,1,2\}\setminus \{i,j\}$; thus $g_{ij}:x_i\rightarrow x_j$ and $a:g_{02}\Rightarrow g_{12}\circ g_{01}$.
\newline
\newline
Finally, write $x_i \equiv x_i^{(3)}\; (i=0,...,3)$, $g_{ij}\equiv g_{ij}^{(3)} \;(i<j)$ and $a_{ijk}\equiv a_{ijk}^{(3)}\;(i<j<k)$ for (respectively) the object, 1-morphism and 2-morphism of $\G^3$ corresponding to the vertex $(i)$, edge $(ij)$ and face $(ijk)$ of $\Delta^2$; thus $g_{ij}:x_i\rightarrow x_j$ and $a_{ijk}:g_{ik}\Rightarrow g_{jk}\circ g_{ij}$.     
\newline
\newline
With these notations in mind, one can immediately see that the twisted cocycle condition corresponds precisely to the commutativity of a tetrahedron $t$ in $\G^3$ as in \ref{2-nerve def}. Thus, such triples are in 1-1 correspondence with diagrams of simplicial sets of the form

\begin{equation}\label{vertex}
\xymatrix{\Delta^0\ar[d]_x\ar@<0.5ex>[r]\ar@<-0.5ex>[r] & \Delta^1\ar[d]_g\ar@<1ex>[r]\ar[r]\ar@<-1ex>[r] & \Delta^2\ar[d]_a\ar@<1ex>[r]\ar@<0.35ex>[r]\ar@<-0.35ex>[r]\ar@<-1ex>[r] &\Delta^3\ar[d]_t\\
			\N\G^0\ar@<0.5ex>[r]\ar@<-0.5ex>[r] & \N\G^1\ar@<1ex>[r]\ar[r]\ar@<-1ex>[r] & \N\G^2\ar@<1ex>[r]\ar@<0.4ex>[r]\ar@<-0.4ex>[r]\ar@<-1ex>[r] &\N\G^3\;\;.}
\end{equation}			
Since $\N\G^n$ is 3-coskeletal, diagrams as above are in turn the 0-simplices $$Tot_r(r\N\G^\bullet)_0=\uline{\rcsset}(r\Delta^\bullet,r\N\G^\bullet)_0=\rcsset(r\Delta^\bullet,r\N\G^\bullet).$$

\begin{defn}\label{definition of gauge}(cf. \cite[definition 1.5]{Ye1})
Let $\mb{d}=(x,g,a),\;\mb{d'}=(x',g',a')$ be a pair of descent data of $\G^\bullet$. A \emph{gauge transformation} $\mb{d}\leadsto \mb{d'}$ is a pair $(f,c)$ in which:
\begin{enumerate}
\item $f:x\rightarrow x'$ is a 1-morphism in $\G^0$ and
\item $c: d^0f\circ g_{01} \Rightarrow g'_{01}\circ d^1f$ is a 2-morphism in $\G^1$ (see diagram \ref{c})
\end{enumerate}

\begin{equation}\label{c}
 \xymatrix{x_0\ar[r]^{g_{01}}\ar@{}|{\LSwarrow c}[dr]\ar[d]_{f_0} & x_1
 \ar[d]^{f_1\quad (f_0:=d^1f,\;f_1:=d^0f)} 
 \\ x'_0\ar[r]_{g'_{01}} & x'_1} 
\end{equation}

such that the prism in $\G^2$ 
\begin{equation}\label{p}
\xymatrix@=2.5pc
{& x_1\ar@{.>}|{f_1}[ddd]^(0.72){\quad}="0"\ar|{g_{12}}[dr] &\\
 x_0\ar|{f_0}[ddd]\ar|{g_{01}}[ur]\ar|{g_{02}}[rr]\ar@{}[dddr]^(0.45){\quad}="1"
 &\ar@{=>}@/^.6pc/^{\quad c_{02}}"0";"1" {} ^{} \ar@{}[u]_(.3){\LUparrow a_{012}} & x_2\ar|{f_2}[ddd]^{\quad (c_{ij}: f_j\circ g_{ij}\Rightarrow g'_{ij}\circ f_i)}\\
 \ar@{}[r]^{\LSwarrow c_{01}} &\ar@{}[r]^{\LSwarrow c_{12}} & \\
  & x'_1\ar@{.>}|{g'_{12}}[dr] & \\ x'_0\ar@{.>}|{g'_{01}}[ur]\ar|{g'_{02}}[rr] &\ar@{}[u]_(.3){\LUparrow a'_{012}} & x'_2}
\end{equation}  
   is commutative in the sense of \ref{tetrahedron commutativity}.

\end{defn}

Let $Desc(\G^\bullet)$ denote the set of descent data of $\G^\bullet$. The relation $\mb{d} R\mb{d'}\Leftrightarrow \exists \mb{d}\leadsto\mb{d'}$ is an equivalence relation on $Desc(\G^\bullet)$ and we denote by $\oline{Desc}(\G^\bullet)$ its quotient (cf. \cite{Ye1}, definition 1.8). We now claim that:
\begin{thm}\label{translation}
For any cosimplicial 2-groupoid $\G^\bullet$, there is a (natural) isomorphism $$\oline{Desc}(\G^\bullet)\cong\pi_0Tot_r(r\N\G^\bullet)$$
\end{thm}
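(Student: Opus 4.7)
The plan is to extend the identification of descent data with $0$-simplices of $Tot_r(r\N\G^\bullet)$, already established in the discussion preceding the statement, from the level of sets to the level of $\pi_0$, by unpacking $1$-simplices as gauge transformations. First I would check that $Tot_r(r\N\G^\bullet)$ is a Kan complex, so that its $\pi_0$ may be computed as the quotient of its $0$-simplices by the relation ``endpoints of a common $1$-simplex''. Each $\N\G^n$ is a Kan complex (Proposition \ref{2-nerve}(2)), hence $r\N\G^\bullet$ is projectively — and therefore by Corollary \ref{restricted simplex} Reedy — fibrant in $\rcsset$; Remark \ref{rDelta is cofibrant} gives Reedy cofibrancy of $r\Delta^\bullet$; and SM7 applied to the simplicial model structure on $\rcsset$ shows $\uline{\rcsset}(r\Delta^\bullet,r\N\G^\bullet)=Tot_r(r\N\G^\bullet)$ is Kan.

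Second I would unpack a $1$-simplex of $Tot_r(r\N\G^\bullet)$ with endpoints $\mb{d}=(x,g,a)$ and $\mb{d'}=(x',g',a')$ as a coface-compatible family of maps $\phi^n:\Delta^n\times\Delta^1\rar \N\G^n$. Using that each $\N\G^n$ is $3$-coskeletal, only $n\leq 3$ carry nontrivial data:
\begin{itemize}
\item $\phi^0:\Delta^1\rar \N\G^0$ is a $1$-morphism $f:x\rar x'$ in $\G^0$;
\item the square $\phi^1:\Delta^1\times\Delta^1\rar \N\G^1$ has vertical edges $d^1f,\;d^0f$ (forced by coface-compatibility with $\phi^0$) and horizontal edges $g,\;g'$ (from the endpoints); choosing a triangulation and invoking $3$-coskeletality, the two resulting $2$-simplices assemble into a $2$-morphism $c:d^0f\circ g_{01}\Rightarrow g'_{01}\circ d^1f$ as in diagram \ref{c};
\item the prism $\phi^2:\Delta^2\times\Delta^1\rar \N\G^2$ has top/bottom the $2$-simplices for $a,\;a'$ and side squares carrying $2$-morphisms $c_{ij}$; by $3$-coskeletality it is determined by three tetrahedra, whose joint commutativity in the sense of (\ref{tetrahedron commutativity}) is exactly the prism-commutativity of diagram \ref{p} required by Definition \ref{definition of gauge};
\item $\phi^3$ and all higher $\phi^n$ are forced by coskeletality together with coface-compatibility and contribute no new constraints.
\end{itemize}
This produces a natural bijection between $1$-simplices of $Tot_r(r\N\G^\bullet)$ with prescribed endpoints and gauge transformations $\mb{d}\leadsto\mb{d'}$.

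Combining the two steps, since $Tot_r(r\N\G^\bullet)$ is Kan the relation ``endpoints of a common $1$-simplex'' is already an equivalence on $0$-simplices, and it matches the gauge-equivalence relation on $Desc(\G^\bullet)$; passing to quotients yields the desired natural isomorphism $\oline{Desc}(\G^\bullet)\cong\pi_0 Tot_r(r\N\G^\bullet)$, naturality being immediate from the functoriality of $\N$, $r$ and $Tot_r$. The main obstacle will be the level-$n=2$ analysis: checking that the prism-commutativity condition of \ref{p} is equivalent to the commutativity, in the sense of (\ref{tetrahedron commutativity}), of the three tetrahedra obtained from a triangulation of $\Delta^2\times\Delta^1$. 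This requires labeling the triangulation explicitly and matching each individual tetrahedral condition, using the descent axioms on $\mb{d},\mb{d'}$ and the compatibilities among the $c_{ij}$, with the single prism condition in Definition \ref{definition of gauge}.
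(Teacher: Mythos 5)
Your overall strategy is the same as the paper's: identify $0$-simplices of $Tot_r(r\N\G^\bullet)$ with descent data (done in the discussion preceding the theorem), show the total space is Kan so that $\pi_0$ is computed by the ``joined by a $1$-simplex'' relation, and match that relation with gauge equivalence. Your route to Kan-ness (SM7 for the simplicial model structure on $\rcsset$, with $r\Delta^\bullet$ Reedy cofibrant and $r\N\G^\bullet$ projectively, hence Reedy, fibrant) is a valid alternative to the paper's, which instead observes via Proposition \ref{2-groupoid structure} that $Tot_r(r\N\G^\bullet)\cong \N Tot_r(r\G^\bullet)$ is the $2$-nerve of a $2$-groupoid and hence Kan by Proposition \ref{2-nerve}(2).

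The one genuine problem is your claim of a ``natural bijection between $1$-simplices of $Tot_r(r\N\G^\bullet)$ with prescribed endpoints and gauge transformations.'' This is false as stated, and it is exactly the ``slight problem'' the paper pauses to address. A $1$-simplex is a map $r\Delta^\bullet\times r\Delta^1\rar r\N\G^\bullet$; at cosimplicial level $1$ this is a map $\Delta^1\times\Delta^1\rar \N\G^1$, and the product $\Delta^1\times\Delta^1$ comes with a canonical triangulation (there is no ``choosing'' to be done): it has a nondegenerate diagonal edge and two nondegenerate $2$-simplices. Such a map therefore carries strictly more data than the single $2$-morphism $c$ of Definition \ref{definition of gauge} --- namely an extra $1$-morphism $h$ on the diagonal and two $2$-morphisms filling the triangles --- and likewise the level-$2$ prism carries diagonal data. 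So the correspondence is many-to-one, not bijective. Fortunately your proof does not need a bijection: for computing $\pi_0$ it suffices that a $1$-simplex with endpoints $\mb{d},\mb{d'}$ exists if and only if a gauge transformation $\mb{d}\leadsto\mb{d'}$ exists. That is precisely what the paper's Lemma \ref{path-gauge} supplies, by composing the triangle fillers to pass from a path to a gauge transformation, and by inserting $f_1\circ g_{01}$ on the diagonal with an identity filler to pass back. You should weaken your bullet-point claim accordingly; the remaining deferred check (that prism-commutativity of diagram \ref{p} matches the tetrahedral conditions of the triangulated prism) is the real content and is handled, somewhat tersely, in the proof of Lemma \ref{path-gauge}.
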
 

In order to prove Theorem \ref{translation} we would like to view a gauge transformation $\mb{d}\leadsto \mb{d'}$ as a path between two vertices of $Tot_r(r\N\G^\bullet)$ but there is a slight problem. Given a pair of descent data, thought of as 4-tuples $\mb{d}=(x,g,a,t)$ and $\mb{d'}=(x',g',a',t')$ of the form \ref{vertex}, a path between them is an element of $$Tot_r(r\N\G^\bullet)_1=\uline{\rcsset}(r\Delta^\bullet,r\N\G^\bullet)_1=\rcsset(r\Delta^\bullet\times r\Delta^1,r\N\G^\bullet)$$ that restricts to $\mb{d}$ and $\mb{d'}$ via the maps $d^1,d^0:\xymatrix@1{\Delta^0\ar@<0.5ex>[r]\ar@<-0.5ex>[r] & \Delta^1}$. Since $\N\G^n$ is 3-coskeletal, such elements correspond to diagrams of the form
\begin{equation}\label{path}
\xymatrix{\Delta^0\times\Delta^1\ar[d]_f\ar@<0.5ex>[r]\ar@<-0.5ex>[r] & \Delta^1\times\Delta^1\ar[d]_{c'}\ar@<1ex>[r]\ar[r]\ar@<-1ex>[r] & \Delta^2\times\Delta^1\ar[d]_{p'}\\
			\N\G^0\ar@<0.5ex>[r]\ar@<-0.5ex>[r] & \N\G^1\ar@<1ex>[r]\ar[r]\ar@<-1ex>[r] & \N\G^2}
\end{equation}
that restrict to that restrict to $(x,g,a)$ and $(x,g,a)$. 

The last diagram carries an automatic `triangulation'. The map $c'$ is a diagram in $\G^1$ of the form
\begin{equation}\label{c'}
\xymatrix{x_0\ar[r]^{g_{01}}\ar|{h}[dr]^{\LSwarrow}_{\LSwarrow}\ar[d]_{f_0} & x_1\ar[d]^{f_1} \\ x'_0\ar[r]_{g'_{01}} & x'_1}
\end{equation}
which is a triangulation of \ref{c}; and similarly, the map $p'$ is a diagram in $\G^2$ which is a triangulation of \ref{p}.  

There are two possible solutions for that. The first (which was suggested by the referee) is to change the framework into crossed complexes, relying on \cite[Theorem 2.4]{BH2} and obtain a description of gauge transformations as maps of crossed complexes. The second, which we will adopt for the sake of simplicity, is to notice the following:
\begin{lem}\label{path-gauge}
Every gauge transformation $\mb{d}\leadsto \mb{d'}$ gives rise to a canonical path in $Tot_r(\N\G^\bullet)$ between $\mb{d}$ and $\mb{d'}$ and every such path gives rise to a canonical gauge transformation.
\end{lem}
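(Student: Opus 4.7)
My plan is to exploit the 3-coskeletality of each $\N\G^n$ (Definition \ref{2-nerve def}) to reduce the desired correspondence to a finite combinatorial comparison between the standard shuffle triangulations of the prisms $\Delta^k \times \Delta^1$ and the explicit data of a gauge transformation as in Definition \ref{definition of gauge}.

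First I would unpack what a 1-simplex of $Tot_r(r\N\G^\bullet)$ is: by definition it is a restricted cosimplicial map $\omega \colon r\Delta^\bullet \times r\Delta^1 \to r\N\G^\bullet$ whose restrictions along the two endpoint inclusions $\Delta^0 \rightrightarrows \Delta^1$ coincide with $\mb{d}$ and $\mb{d'}$. Since each $\N\G^n$ is 3-coskeletal, $\omega$ is determined by its restriction to cosimplicial degrees $\leq 3$, with the data in degree $3$ contributing only a tetrahedron-commutativity constraint in the sense of (\ref{tetrahedron commutativity}). The standard shuffle triangulation decomposes $\Delta^k \times \Delta^1$ into $k+1$ non-degenerate $(k+1)$-simplices: at level $1$ this reproduces diagram (\ref{c'}) with a single diagonal $h$, and at level $2$ it breaks the prism appearing in (\ref{p}) into three tetrahedra glued along two interior triangles.

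For the forward direction, given a gauge transformation $(f,c)$ together with its prism data $c_{ij}, a_{012}, a'_{012}$, I would construct $\omega$ level-by-level by a canonical filling: at level $0$ the unique non-degenerate edge goes to $f$; at level $1$ the diagonal in (\ref{c'}) is taken to be $h := f_1 \circ g_{01}$, the upper triangle is filled with the identity 2-cell $1_h$, and the lower triangle is filled with $c$; at level $2$ each newly introduced interior edge of the three tetrahedra is taken to be the canonical composite $f_j \circ g_{ij}$, and the 2-cell on each face is the canonical one (an identity, or a horizontal composite of an identity with $c_{ij}$, $a_{012}$ or $a'_{012}$) read off from the prism. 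The commutativity of each of the three tetrahedra then either collapses to a triviality (when enough faces are identities) or encodes one portion of the prism-compatibility axiom of Definition \ref{definition of gauge}; their joint commutativity is equivalent to the full prism axiom.

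For the reverse direction, given a path $\omega$ I would extract the gauge data by reading off $f$ at level $0$, setting $c := \beta * \alpha^{-1}$ from the two 2-cells $\alpha,\beta$ appearing in the triangulated square (\ref{c'}) at level $1$, and analogously recovering $c_{ij}, a_{012}, a'_{012}$ from the three level-$2$ tetrahedra by composing 2-cells with the appropriate inverses along the two interior triangles. The prism compatibility (\ref{p}) then follows from the commutativity of the three tetrahedra combined with the interchange law. I expect the main technical obstacle to be exactly this level-$2$ bookkeeping: correctly identifying the three shuffle tetrahedra and their two shared interior triangles, tracking the orientation of each 2-cell on each face, and confirming that the joint commutativity of the three tetrahedra is logically equivalent to the single prism axiom of Definition \ref{definition of gauge}. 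Once this is established, cosimplicial compatibility in degree $3$ is automatic from the (twisted 2-cocycle) conditions satisfied by $\mb{d}$ and $\mb{d'}$, and higher cosimplicial degrees contribute nothing new by 3-coskeletality.
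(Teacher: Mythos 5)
Your proposal is correct and follows essentially the same route as the paper's proof: in one direction you insert the composite $f_1\circ g_{01}$ as the diagonal with an identity $2$-cell in the upper triangle (and the analogous canonical filling of the triangulated prism), and in the other you recover the gauge data by composing the $2$-cells of the triangulated square and prism. Your treatment is somewhat more explicit about the shuffle triangulation and the level-$2$ bookkeeping, which the paper leaves to the reader, but the underlying argument is the same.
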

\begin{proof}
Given a path between $\mb{d}$ and $\mb{d'}$, represented by a triple $(f,c',p')$ as in \ref{path}, one can compose the 2-morphisms appearing in $c'$ and in the squares of $p'$ to obtain a triple $(f,c,p)$ and hence a gauge transformation $\mb{d}\leadsto \mb{d'}$. Conversely, given a gauge transformation $(f,c):\mb{d}\leadsto \mb{d'}$, one obtains, from condition \ref{p} of definition \ref{definition of gauge} a prism $p$ in $\G^2$. Then, by inserting the 1-morphism $f_1\circ g_{01}$ as the diagonal in \ref{c} and $1_{f_1\circ g_{01}}$ in the upper triangle, one obtains a diagram of the form \ref{c'} and a similar procedure on \ref{p} yields a prism $p'$. The triple $(f,c',p')$ is the resulting path.
\end{proof}

Expressing the totalization as an end allow us to reveal its higher structure:
\begin{prop}\label{2-groupoid structure}
For a cosimplicial 2-groupoid $\G^\bullet$, there are natural isomorphisms
\begin{enumerate}
\item $Tot(\N\G^\bullet)\cong \N Tot(\G^\bullet)$;
\item $Tot_r(r\N \G^\bullet)\cong \N Tot_r(r\G^\bullet)$;
\end{enumerate}
(see definition \ref{abstract tot}).
\end{prop}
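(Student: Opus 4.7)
My plan is to deduce both isomorphisms from the end-description of totalization (Definition \ref{abstract tot}) together with the enriched adjunction of Proposition \ref{Enriched adjunction} and a direct restricted analog thereof.

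For part (1), I would begin by writing
$$Tot(\G^\bullet)=\int_{[n]\in\Sim}(\G^n)^{\Delta^n}$$
as an end in $\tgpd$ using Corollary \ref{cosimplicial 2-groupoids are t-c}, with cotensor $(\G^n)^{\Delta^n}=\uline{\tgpd}(\W\Delta^n,\G^n)$ as in the proof of Proposition \ref{2gpd is t-c}. Unwinding the wedge condition identifies this end with the 2-groupoid whose objects, 1-morphisms and 2-morphisms are cosimplicial 2-functors, 2-natural transformations and modifications $\W\Delta^\bullet\rightarrow\G^\bullet$; this is the 2-groupoid-valued internal hom of $\tgpd^\Sim$, whose 2-nerve is the simplicial internal hom $\uline{\tgpd^\Sim}(\W\Delta^\bullet,\G^\bullet)$ appearing in Proposition \ref{Enriched adjunction}. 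Applying $\N$ thus gives $\N Tot(\G^\bullet)\cong\uline{\tgpd^\Sim}(\W\Delta^\bullet,\G^\bullet)$, which by Proposition \ref{Enriched adjunction} equals $\uline{\csset}(\Delta^\bullet,\N\G^\bullet)=Tot(\N\G^\bullet)$.

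Part (2) is by the identical argument with $\Sim_r$ replacing $\Sim$; the only additional ingredient is the restricted version
$$\uline{\tgpd^{\Sim_r}}(\W X^\bullet,\G^\bullet)\cong\uline{\rcsset}(X^\bullet,\N\G^\bullet),$$
for $X^\bullet\in\rcsset$ and $\G^\bullet\in\tgpd^{\Sim_r}$. The proof of Proposition \ref{Enriched adjunction} invokes only preservation of coproducts by $\W$ via \cite[Lemma 2.9(1)]{GJ} and is otherwise formal in the shape of the indexing category, so it transfers verbatim to $\Sim_r$. Applying this with $X^\bullet=r\Delta^\bullet$ yields $\N Tot_r(r\G^\bullet)\cong Tot_r(r\N\G^\bullet)$.

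The main point requiring real work is the identification of the $\tgpd$-valued end $\int_{[n]}\uline{\tgpd}(\W\Delta^n,\G^n)$ with the 2-groupoid of cosimplicial 2-natural transformations and modifications, and of its 2-nerve with $\uline{\tgpd^\Sim}(\W\Delta^\bullet,\G^\bullet)$. These are familiar statements of enriched category theory (ends of internal homs compute enriched naturals), but they must be tracked simultaneously at the three structural levels of a 2-groupoid: the wedge condition for the end becomes respect for cofaces (and codegeneracies in the unrestricted case) of 2-functors, 2-natural transformations and modifications. Once this bookkeeping is set up, the rest of the proof is pure adjoint calculus.
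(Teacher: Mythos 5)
Your proposal is correct, but it takes a different (though closely related) route from the paper. The paper's proof is a two-line computation: since $\N$ has the left adjoint $\W$ it preserves limits and hence ends, so $\N$ passes inside $\int_{[n]}(\G^n)^{\Delta^n}$; then $\N\bigl((\G^n)^{\Delta^n}\bigr)\cong(\N\G^n)^{\Delta^n}$ levelwise by \cite[II, Lemma 2.9(2)]{GJ}, using that $\W$ preserves coproducts. You instead recognize $\int_{[n]}\uline{\tgpd}(\W\Delta^n,\G^n)$ as the hom-object of the enriched functor category, match its 2-nerve with the simplicial enrichment $\uline{\tgpd^{\Sim}}(\W\Delta^\bullet,\G^\bullet)$ defined via the tensor, and then invoke Proposition \ref{Enriched adjunction} at $\Delta^\bullet$. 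Both arguments ultimately rest on the same input (the adjunction $\W\dashv\N$ with $\W$ preserving coproducts, i.e.\ the two halves of \cite[II, Lemma 2.9]{GJ}), but your decomposition front-loads the bookkeeping you yourself flag: verifying that the end of internal homs is the enriched-natural-transformations object and that its nerve agrees with the paper's tensor-defined mapping space (one checks $\N\bigl(\int_n\uline{\tgpd}(\W\Delta^n,\G^n)\bigr)_m\cong\int_n\tgpd(\W\Delta^n\times\W\Delta^m,\G^n)\cong\tgpd^{\Sim}(\W\Delta^\bullet\otimes\Delta^m,\G^\bullet)$). The paper's version avoids this entirely by never leaving the levelwise picture. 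What your route buys is a conceptual explanation of why both totalizations compute the same mapping object, and it puts Proposition \ref{Enriched adjunction} to explicit use; your observation that the adjunction transfers verbatim to $\Sim_r$ is also correct and mirrors the paper's ``the proof of (2) is identical.''
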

\begin{proof}
We only prove (1) as the proof of (2) is identical. Since $\N$ is a right adjoint, it commutes with limits. Relying on \cite[IX.5]{Ma}, 
\begin{equation}
\begin{split}
\N Tot(\G^\bullet)=N\left( \displaystyle\int_{[n]\in \Sim} (\G^n)^{\Delta^n}\right) \cong \displaystyle\int_{[n]\in\Sim}\N\left( (\G^n)^{\Delta^n}\right)\cong \displaystyle\int_{[n]\in\Sim}(\N\G^n)^{\Delta^n}=Tot(\N\G^\bullet)
\end{split}
\end{equation}
 where the last isomorphism comes from \cite[II, Lemma 2.9(2)]{GJ} relying on the fact that $\W$ commutes with arbitrary coproducts.
\end{proof}
Thus, we define:
\begin{defn}\label{descent 2-groupoid}
Given a cosimplicial 2-groupoid $\G^\bullet$, its \emph{descent 2-groupoid} is \\$\mb{Desc}(\G^\bullet):=Tot_r(\N\G^\bullet).$
\end{defn}
\begin{proof}[Proof of Theorem \ref{translation}]
Since $Tot_r(r\N\G^\bullet)$ is a Kan complex (being the 2-nerve of a 2-groupoid), $\pi_0Tot_r(\N\G^\bullet)=Tot_r(\N\G^\bullet)_0/\sim$ where $\mb{d}\sim \mb{d'}$ iff there is a path between them. By lemma \ref{path-gauge} this equivalence relation is equal to the gauge equivalence relation. 
\end{proof}

\end{section}
\begin{section}{Invariance of descent data}\label{Invariance of descent data}
Theorem \ref{translation} enables us to use homotopy-theoretic tools to prove invariance of descent data under weak equivalence.
We need one more simple theorem:
\begin{thm}\label{holim}
For any cosimplicial 2-groupoid $\G^\bullet$, there is a (natural) weak equivalence $Tot_r(\N\G^\bullet)\simeq \holim_{\Sim}\N\G^\bullet$, 
\end{thm}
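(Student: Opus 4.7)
My plan is to identify $Tot_r(\N\G^\bullet)$ with a derived mapping space in the Reedy (= projective) model structure on $\rcsset$, and then transport this identification to the full simplex category via homotopy cofinality of the inclusion $\iota:\Sim_r\hookrightarrow\Sim$.

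First I would observe that by Proposition \ref{2-nerve}(2) each $\N\G^n$ is a Kan complex, so Corollary \ref{restricted simplex} guarantees that $r\N\G^\bullet$ is Reedy (equivalently projectively) fibrant in $\rcsset$. By Remark \ref{rDelta is cofibrant}, $r\Delta^\bullet$ is Reedy cofibrant in $\rcsset$, and since each $\Delta^n$ is contractible the canonical map $r\Delta^\bullet\to *$ is a levelwise weak equivalence. In particular, $r\Delta^\bullet$ is a cofibrant resolution of the terminal object in the projective model structure. Standard model-categorical considerations then identify
$$Tot_r(\N\G^\bullet)=\uline{\rcsset}(r\Delta^\bullet,r\N\G^\bullet)$$
as a model for the homotopy limit $\holim_{\Sim_r}\iota^*\N\G^\bullet$.

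Second, I would invoke the cofinality theorem for homotopy limits applied to $\iota:\Sim_r\hookrightarrow \Sim$. Granting that for every $[m]\in\Sim$ the comma category $\iota\downarrow [m]$ has contractible classifying space, the functor $\iota$ is homotopy initial, and so for any levelwise fibrant cosimplicial simplicial set $X^\bullet$ one obtains a natural weak equivalence $\holim_\Sim X^\bullet\simeq\holim_{\Sim_r}\iota^*X^\bullet$. Applying this to $X^\bullet=\N\G^\bullet$ and combining with the previous paragraph gives the theorem; naturality in $\G^\bullet$ is built into each construction.

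The main obstacle is verifying the contractibility of the comma categories $\iota\downarrow [m]$. When $m=0$ this category is literally $\Sim_r$, and the contractibility of $N\Sim_r$, while classical, is nontrivial. For general $m$ I would use the projection $\iota\downarrow [m]\to \Sim_r\downarrow [m]$ sending a map $f\colon [n]\to[m]$ to the injective factor $\mu\colon [k]\hookrightarrow [m]$ of its epi--mono factorization: the target has the terminal object $([m],\mathrm{id}_{[m]})$ and is therefore contractible, and Quillen's Theorem~A reduces the problem to checking that the fibers of the projection are contractible, which can be done by exhibiting an initial object in each fiber.
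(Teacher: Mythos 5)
Your overall strategy is the same as the paper's. The first step is identical: $r\N\G^\bullet$ is projectively (= Reedy) fibrant by Proposition \ref{2-nerve}(2) and Corollary \ref{restricted simplex}, $r\Delta^\bullet$ is a weakly contractible cofibrant object by Remark \ref{rDelta is cofibrant}, and the internal hom out of such an object computes the homotopy limit over $\Sim_{r}$ (the paper cites \cite[Theorem 19.4.6(2)]{Hi} for this). For the second step the paper simply cites \cite[Lemma 3.8]{DF}, which is exactly the homotopy-initiality of $\iota:\Sim_{r}\hookrightarrow\Sim$ that you propose to prove by hand; so the only divergence is that you re-derive the cited lemma.

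That re-derivation contains a concrete error in its last step. Your projection $P:\iota\downarrow[m]\rar \Sim_{r}\downarrow[m]$ is well defined and is in fact a Grothendieck fibration (cartesian lifts are given by taking preimages of images), so reducing Quillen's Theorem~A to the fibers of $P$ is legitimate; but the fibers do \emph{not} have initial objects. The fiber over $\mu:[k]\hookrightarrow[m]$ is the category of order-preserving surjections $s:[n]\rar[k]$ with order-preserving injections over $[k]$ as morphisms. The natural candidate $([k],\mathrm{id}_{[k]})$ is not initial: a morphism from it to $([n],s)$ is a section of $s$, and surjections in $\Sim$ generally have several sections (already $s^0:[1]\rar[0]$ has two), so uniqueness fails; and no other object can be initial either. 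What is true is that recording the blocks $s^{-1}(0),\dots,s^{-1}(k)$ identifies this fiber with the $(k+1)$-fold product $\Sim_{r}\times\cdots\times\Sim_{r}$, so its contractibility is equivalent to that of the nerve of $\Sim_{r}$ --- precisely the nontrivial fact you flagged in the case $m=0$. That fact does hold (e.g.\ via the zig-zag of natural transformations $\mathrm{const}_{[0]}\Rightarrow [0]\star(-)\Leftarrow \mathrm{id}$ of endofunctors of $\Sim_{r}$), and with this substitution your argument closes and recovers \cite[Lemma 3.8]{DF}; alternatively you could simply cite that lemma, as the paper does.
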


\begin{proof}
In the simplicial model category $\rcsset_{proj}$, the homotopy limit (over $\Sim_{r}$) of a fibrant object can be described as the internal mapping space from a weakly contractible cofibrant object \cite[Theorem 19.4.6(2)]{Hi}. In our case, $\N\G^\bullet$ is fibrant and $r\Delta^\bullet$ is (weakly contractible and) cofibrant (see remark \ref{rDelta is cofibrant}). Thus, $Tot_r(r\N\G^\bullet)=\uline{\rcsset}(r\Delta^\bullet, \N\G^\bullet)\simeq \holim_{\Sim_{r}}r\N\G^\bullet$. By (\cite[Lemma 3.8]{DF}), $\holim_{\Sim_{r}}r\N\G^\bullet\sim \holim_{\Sim}\N\G^\bullet$.
\end{proof}

In light of definition \ref{descent 2-groupoid} and the previous theorem it now follows that:

\begin{cor}
A weak equivalence of cosimplicial 2-groupoids $\G^\bullet\rar \H^\bullet$ induces a weak equivalence of 2-groupoids $\mb{Desc}(\G^\bullet)\rar \mb{Desc}(\H^\bullet)$.
\end{cor}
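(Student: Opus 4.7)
The plan is to chain the main structural results of the paper together and reduce the statement to the invariance of homotopy limits under level-wise weak equivalences. Writing $F:\G^\bullet\to\H^\bullet$ for the given weak equivalence, what we must show is that the induced strict 2-functor $\mb{Desc}(F):\mb{Desc}(\G^\bullet)\to\mb{Desc}(\H^\bullet)$ is a weak equivalence of 2-groupoids, which, by part 3 of Proposition \ref{2-nerve}, is equivalent to showing that $\N\mb{Desc}(F)$ is a weak equivalence of simplicial sets.

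First I would rewrite $\N\mb{Desc}(\G^\bullet)$ using Proposition \ref{2-groupoid structure}(2): since $\mb{Desc}(\G^\bullet)=Tot_r(r\G^\bullet)$ (after restricting), we have a natural isomorphism
\[
\N\mb{Desc}(\G^\bullet)\;\cong\;Tot_r(r\N\G^\bullet),
\]
and analogously for $\H^\bullet$. Thus it suffices to prove that the map $Tot_r(r\N F):Tot_r(r\N\G^\bullet)\to Tot_r(r\N\H^\bullet)$ is a weak equivalence in $\sset_{K-Q}$.

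Next I would apply the 2-nerve level-wise to $F$. By part 3 of Proposition \ref{2-nerve}, each $\N F^n:\N\G^n\to\N\H^n$ is a weak equivalence of simplicial sets, so $\N F:\N\G^\bullet\to\N\H^\bullet$ is a level-wise (hence projective) weak equivalence in $\rcsset$. By part 2 of the same proposition, each $\N\G^n$ and $\N\H^n$ is a Kan complex, and by Corollary \ref{restricted simplex} this means both $r\N\G^\bullet$ and $r\N\H^\bullet$ are Reedy/projective fibrant. Now Theorem \ref{holim} provides a natural zig-zag of weak equivalences
\[
Tot_r(r\N\G^\bullet)\;\simeq\;\holim_{\Sim_r}r\N\G^\bullet\;\simeq\;\holim_{\Sim}\N\G^\bullet,
\]
and similarly for $\H^\bullet$. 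Since $\holim$ preserves weak equivalences between fibrant diagrams (this is the standard consequence of the simplicial model structure on $\rcsset_{proj}$, using that $r\Delta^\bullet$ is cofibrant by Remark \ref{rDelta is cofibrant}), the middle vertical map is a weak equivalence, and the naturality of the zig-zag in $\G^\bullet$ then forces $Tot_r(r\N F)$ to be a weak equivalence as well.

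The main subtlety — really the only place requiring care — is the naturality of the comparison in Theorem \ref{holim}: one must verify that the identification of $Tot_r$ with $\holim_{\Sim_r}$ via the cofibrant resolution $r\Delta^\bullet$, and the identification $\holim_{\Sim_r}r(-)\simeq\holim_\Sim(-)$ via \cite[Lemma 3.8]{DF}, are both natural in the cosimplicial variable. Both are automatic: the first is the internal-hom description $\uline{\rcsset}(r\Delta^\bullet,-)$, which is a functor of its second argument, and the second is a natural transformation of homotopy limit functors. Once naturality is noted, the proof reduces to a diagram chase in a commutative rectangle whose horizontal maps are weak equivalences and whose right-hand vertical map is a weak equivalence, forcing the left-hand vertical map $\N\mb{Desc}(F)$ to be a weak equivalence as well, whence $\mb{Desc}(F)$ is a weak equivalence of 2-groupoids.
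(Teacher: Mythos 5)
Your argument is correct and is exactly the one the paper intends: the corollary is stated as an immediate consequence of Definition \ref{descent 2-groupoid} and Theorem \ref{holim}, i.e.\ identify $\N\mb{Desc}(-)$ with $Tot_r(r\N(-))\simeq\holim_{\Sim}\N(-)$ naturally and invoke homotopy invariance of $\holim$ together with Proposition \ref{2-nerve}(3). Your additional remarks on naturality of the comparison maps simply make explicit what the paper leaves implicit.
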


In particular, we have:

\begin{cor}(cf. \cite[Theorem 2.4]{Ye1})
If $\G^\bullet\rar \H^\bullet$ is a weak equivalence of cosimplicial 2-groupoids, the induced map $\oline{Desc}(\G^\bullet)\rar \oline{Desc}(\H^\bullet)$ is an isomorphism of sets
\end{cor}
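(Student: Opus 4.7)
The plan is to deduce this as an immediate consequence of the preceding corollary together with Theorem \ref{translation}. Concretely, I would chain together three facts: the weak equivalence of descent 2-groupoids, the fact that weak equivalences of 2-groupoids induce bijections on $\pi_0$ of their 2-nerves, and the natural identification $\oline{Desc}(\G^\bullet)\cong\pi_0Tot_r(r\N\G^\bullet)$ from Theorem \ref{translation}.

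First I would invoke the previous corollary on the weak equivalence $\G^\bullet\rar \H^\bullet$ to obtain a weak equivalence of 2-groupoids $\mb{Desc}(\G^\bullet)\rar \mb{Desc}(\H^\bullet)$. By Proposition \ref{2-nerve}(3), applying the 2-nerve yields a weak equivalence of simplicial sets $\N\mb{Desc}(\G^\bullet)\rar \N\mb{Desc}(\H^\bullet)$, and by Proposition \ref{2-groupoid structure}(2) this is precisely the map $Tot_r(r\N\G^\bullet)\rar Tot_r(r\N\H^\bullet)$. Since both sides are Kan complexes (each being the 2-nerve of a 2-groupoid, by Proposition \ref{2-nerve}(2)), a weak equivalence between them induces a bijection on $\pi_0$.

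Next I would apply the natural isomorphism of Theorem \ref{translation} to both $\G^\bullet$ and $\H^\bullet$. Naturality of the isomorphism $\oline{Desc}(-)\cong\pi_0Tot_r(r\N(-))$ with respect to the map $\G^\bullet\rar \H^\bullet$ gives a commutative square identifying the induced map $\oline{Desc}(\G^\bullet)\rar \oline{Desc}(\H^\bullet)$ with the bijection on $\pi_0$ obtained in the previous step. This yields the desired isomorphism of sets.

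There is no real obstacle, since every ingredient has already been established earlier in the paper. The only point that requires care is the naturality of the isomorphism in Theorem \ref{translation}; one should verify that the explicit translation between descent data (respectively gauge transformations) and vertices (respectively edges) of $Tot_r(r\N\G^\bullet)$ constructed via diagrams (\ref{vertex}) and (\ref{c'}) is functorial in cosimplicial 2-groupoid maps, but this is clear from the construction since everything is defined by postcomposition with $\G^n\rar \H^n$.
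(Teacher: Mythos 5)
Your proposal is correct and follows essentially the same route as the paper: the paper's proof likewise combines Theorem \ref{translation} with the homotopy-invariance statement (there phrased directly via Theorem \ref{holim} and the fact that $\N$ and $\holim_\Sim$ preserve weak equivalences, rather than via the preceding corollary, but this is the same argument). Your extra remarks on the Kan-complex point and on naturality of the isomorphism in Theorem \ref{translation} are sound and only make the argument more explicit.
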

\begin{proof}
By Theorems \ref{translation} and \ref{holim}, the map $\oline{Desc}(\G^\bullet)\rar \oline{Desc}(\H^\bullet)$ coincides with $\pi_0(holim_{\Sim}\N\G^\bullet)\rar \pi_0(holim_{\Sim}\N\H^\bullet)$ and $\N$ and $\holim_\Sim$ preserve weak equivalences.
\end{proof}

\end{section}

\begin{section}{Descent data of cosimplicial $n$-groupoids}\label{n-descent}
The techniques of \S\ref{2-groupoids}--\S\ref{Invariance of descent data} work equaly well in higher dimensions. Here, we write down the details for the case of (strict) $n$-groupoids and the corresponding $n$-nerve $\N_{(n)}$ in the sense of \cite{St} but the same arguments work for weaker notions of $n$-groupoids, e.g. Tamsamani $n$-groupoids. We only need two ingredients. The first is that $N_{(n)}$ admits a left adjoint (and hence commutes with limits); this is true since the inclusion $n\mathcal{G}pd\hookrightarrow nCat$ admits a left adjoint $\Pi_n:nCat\rar n\mathcal{G}pd$ and thus the composite $\Pi_n\circ \tau_n$ (where $\tau_n$ is the fundamental $n$-category) is the desired left adjoint. The second ingredient is that $\N_{(n)}\G$ is a Kan complex for every $n$-groupoid $\G$; this goes back to \cite{Da}.

In light of theorem \ref{translation}, it makes sense to define:
\begin{defn}\label{n-descent data}
Let $\G^\bullet$ be a cosimplicial $n$-groupoid. Its \emph{$n$-descent data} is the simplicial set $\mb{Desc}_n(\G^\bullet):=Tot_r(\N_{(n)}\G)$.
\end{defn}
Definition \ref{n-descent data} makes sense formally, but its geometric meaning is unknown to us. 
Nevertheless, the formal reasoning of proposition \ref{2-groupoid structure} implies:
\begin{prop}
The simplicial set $\mb{Desc}_n(\G^\bullet)$ is the $n$-nerve of an $n$-groupoid.
\end{prop}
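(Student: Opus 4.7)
The plan is to mirror the proof of Proposition \ref{2-groupoid structure}(2) in the $n$-dimensional setting: produce a natural isomorphism
\[
Tot_r(\N_{(n)}\G^\bullet) \;\cong\; \N_{(n)}\bigl(Tot_r(\G^\bullet)\bigr),
\]
where the totalization on the right is computed inside $n\mathcal{G}pd$ as the end of Definition \ref{abstract tot}. Since $\N_{(n)}$ lands in $n$-nerves of $n$-groupoids by definition, this immediately identifies $\mb{Desc}_n(\G^\bullet)$ as the $n$-nerve of $Tot_r(\G^\bullet)\in n\mathcal{G}pd$.

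First I would set up the simplicial enrichment of $n\mathcal{G}pd$ and verify that it is tensored and cotensored over $\sset$, parallel to Proposition \ref{2gpd is t-c}. Writing $\W_{(n)} := \Pi_n\circ \tau_n$ for the left adjoint to $\N_{(n)}$ (which exists by the first ingredient of \S\ref{n-descent}), and using the cartesian closed structure on $n\mathcal{G}pd$ (standard for strict $n$-groupoids, with an analogous weak version for Tamsamani), the simplicial hom space is $\N_{(n)}\underline{n\mathcal{G}pd}(-,-)$, the tensor is $K\otimes \G := \W_{(n)}(K)\times \G$, and the cotensor is $\G^K := \underline{n\mathcal{G}pd}(\W_{(n)}(K),\G)$. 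The formal argument of Corollary \ref{cosimplicial 2-groupoids are t-c} then extends this structure to $(n\mathcal{G}pd)^{\Sim_r}$, so that Definition \ref{abstract tot} makes sense and supplies $Tot_r(\G^\bullet)$ as an object of $n\mathcal{G}pd$.

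Second, I would run the same chain of natural isomorphisms as in Proposition \ref{2-groupoid structure}:
\[
\N_{(n)}\bigl(Tot_r(\G^\bullet)\bigr) \;=\; \N_{(n)}\!\left(\int_{[k]\in\Sim_r}(\G^k)^{\Delta^k}\right) \;\cong\; \int_{[k]\in\Sim_r}\N_{(n)}\bigl((\G^k)^{\Delta^k}\bigr) \;\cong\; \int_{[k]\in\Sim_r}(\N_{(n)}\G^k)^{\Delta^k} \;=\; Tot_r(\N_{(n)}\G^\bullet).
\]
The first isomorphism uses that $\N_{(n)}$, being a right adjoint, commutes with the limit defining the end. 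The second is the $n$-dimensional analogue of \cite[II, Lemma 2.9(2)]{GJ}; it reduces to the observation that $\W_{(n)}$ commutes with coproducts, which is automatic for a left adjoint, exactly as was invoked in Proposition \ref{Enriched adjunction}.

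The main obstacle I anticipate is the careful verification of the cotensor identity $\N_{(n)}(\G^K)\cong (\N_{(n)}\G)^K$ in generality, which packages the adjunctions and the coproduct-preservation of $\W_{(n)}$ into a strict isomorphism rather than merely a homotopical one; once the enrichment is pinned down this is formal, but in weaker models (e.g.\ Tamsamani) one must check that the chosen inner hom is a genuine cotensor. With that in hand, the displayed identification is immediate, and the fact that $Tot_r(\G^\bullet)$ is an object of $n\mathcal{G}pd$ by construction completes the proof.
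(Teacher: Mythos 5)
Your proposal is correct and follows essentially the same route as the paper, which simply invokes ``the formal reasoning of Proposition \ref{2-groupoid structure}'': namely, that $\N_{(n)}$ is a right adjoint and hence commutes with the end defining $Tot_r$, together with the cotensor compatibility coming from $\W_{(n)}$ preserving coproducts. Your additional care in pinning down the tensored/cotensored enrichment of $n\mathcal{G}pd$ is exactly the implicit content the paper leaves to the reader.
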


Moreover, theorem \ref{holim} generalizes immediately.
\begin{thm}
Let $\G^\bullet$ be a cosimplicial (strict) $n$-groupoid. There is a natural weak equivalence
$Tot_r(r\N_{(n)}\G^\bullet)\simeq \holim_{\Sim} \N_{(n)}\G^\bullet$. 
\end{thm}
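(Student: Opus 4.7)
The plan is to mirror the proof of Theorem \ref{holim} verbatim, replacing $\N$ by $\N_{(n)}$ and using the two ingredients spelled out at the beginning of \S\ref{n-descent} as the only input that depends on $n$. The key point is that Corollary \ref{restricted simplex}, which identifies Reedy fibrancy in $\rcsset$ with level-wise Kan-ness, is insensitive to the dimension of the groupoids one is nerving; thus once one knows $\N_{(n)}\G^k$ is Kan, the homotopy-limit machinery of \cite{Hi} and the restriction comparison of \cite{DF} apply without change.

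First I would observe that $\N_{(n)}\G^\bullet$ is Reedy fibrant in $\rcsset$: by the second ingredient (Dawson's theorem \cite{Da}) each $\N_{(n)}\G^k$ is a Kan complex, and by Corollary \ref{restricted simplex} this is precisely Reedy fibrancy. Next, $r\Delta^\bullet$ is Reedy cofibrant by Remark \ref{rDelta is cofibrant} and is level-wise weakly contractible. Invoking Hirschhorn's description of homotopy limits in a simplicial model category \cite[Theorem 19.4.6(2)]{Hi} then yields
$$Tot_r(r\N_{(n)}\G^\bullet)=\uline{\rcsset}(r\Delta^\bullet,\N_{(n)}\G^\bullet)\simeq \holim_{\Sim_r}r\N_{(n)}\G^\bullet.$$
To convert this into a homotopy limit over the full $\Sim$, one applies \cite[Lemma 3.8]{DF} to produce the natural weak equivalence $\holim_{\Sim_r}r\N_{(n)}\G^\bullet\simeq \holim_{\Sim}\N_{(n)}\G^\bullet$, which concludes the argument.

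I do not anticipate any genuine obstacle: the proof of Theorem \ref{holim} was written so as to use only (a) level-wise Kan-ness of the nerve, (b) Reedy cofibrancy of $r\Delta^\bullet$ in $\rcsset$, and (c) the comparison \cite[Lemma 3.8]{DF}, none of which is special to $n=2$. The one item to double-check is the existence of a tensor--cotensor structure on $(n\mathcal{G}pd)^{\Sim_r}$ which allows the end-theoretic form of $Tot_r$ in Definition \ref{abstract tot} to apply; this follows exactly as in Proposition \ref{2gpd is t-c} and Corollary \ref{cosimplicial 2-groupoids are t-c}, using the left adjoint $\Pi_n\circ \tau_n$ to $\N_{(n)}$ furnished by the first ingredient of \S\ref{n-descent} in place of the Whitehead functor $\W$. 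Naturality in $\G^\bullet$ comes for free, since every step of the chain is natural in the fibrant projective diagram being totalized.
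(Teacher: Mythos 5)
Your proposal is correct and is exactly the argument the paper intends: the paper states that Theorem \ref{holim} ``generalizes immediately,'' and your proof reproduces that argument with $\N$ replaced by $\N_{(n)}$, using level-wise Kan-ness of $\N_{(n)}\G^k$ together with Corollary \ref{restricted simplex}, Reedy cofibrancy of $r\Delta^\bullet$, Hirschhorn's description of the homotopy limit, and the comparison of \cite[Lemma 3.8]{DF}. (One trivial slip: the reference \cite{Da} is to Dakin, not Dawson.)
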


\end{section}

\end{document}